\newtheorem{theorem}{Theorem}
\newtheorem{lemma}{Lemma}
\theoremstyle{definition}
\begin{document}
\title{Constraint Energy Minimizing Generalized Multiscale Finite Element Method
in the Mixed Formulation}

\author{
Eric Chung \thanks{Department of Mathematics,
The Chinese University of Hong Kong (CUHK), Hong Kong SAR. Email: {\tt tschung@math.cuhk.edu.hk}.
The research of Eric Chung is supported by Hong Kong RGC General Research Fund (Project 14317516).}
\and
Yalchin Efendiev \thanks{Department of Mathematics \& Institute for Scientific Computation (ISC),
Texas A\&M University,
College Station, Texas, USA. Email: {\tt efendiev@math.tamu.edu}.}
\and
Wing Tat Leung \thanks{Department of Mathematics, Texas A\&M University, College Station, TX 77843}
}

\maketitle

\begin{abstract}

This paper presents a novel mass-conservative mixed multiscale method for solving
flow equations in heterogeneous porous media. The media properties
(the permeability)
contain multiple scales
and high contrast. The proposed method solves the flow equation in a mixed formulation
on a coarse grid
by constructing multiscale basis functions. The resulting velocity field is mass conservative
on the fine grid. Our main goal is to obtain first-order convergence in terms of the
mesh size which
is independent
of local contrast. This is achieved, first, by constructing some auxiliary spaces,
which contain global information that can not be localized, in general.
This is built on our previous work on the Generalized Multiscale Finite Element Method (GMsFEM).
 In the auxiliary space, multiscale basis functions corresponding
to small (contrast-dependent) eigenvalues are selected.
These basis functions represent the high-conductivity channels (which connect the boundaries
of a coarse block).
 Next, we solve local problems
to construct multiscale basis functions for the velocity field. These local
problems are formulated in the oversampled domain taking into account
some constraints with respect to auxiliary spaces. The latter allows fast
spatial decay
of local solutions and, thus, allows taking smaller oversampled regions.
 The number of basis
functions depends on small eigenvalues of the local spectral problems. Moreover,
multiscale pressure basis functions are needed in constructing the velocity space.
Our multiscale spaces have a minimal dimension, which is needed to
avoid contrast-dependence in the convergence. The method's convergence requires an oversampling
of several layers. We present an analysis of our approach.
Our numerical results confirm that the convergence rate is first order
with respect to the mesh size and independent
of the contrast.

\end{abstract}

\section{Introduction}

Multiscale problems occur in many subsurface applications.
Multiple scales in permeabilities (subsurface properties)
 can span a large range and the variations
in the permeability can be of several orders of magnitude. Moreover,
the subsurface problems are often solved multiple times with different source
 terms and boundary conditions, e.g., in optimization of well locations.
These models often admit a reduced representation and both local and global
model reduction techniques have been developed in many papers.
In local model reduction techniques,
a reduced-order model on a coarse grid (with the grid size much larger
than the heterogeneities) is sought.
One of the commonly used methods
includes upscaling of the permeability field \cite{dur91,cdgw03,weh02}. This is done
by solving local problems in each coarse block and computing
the effective properties. Similar ideas have been employed
for upscaling relative permeabilities in multi-phase flow
simulations \cite{BT,dur_comp98}. The resulting effective medium is, then,
 used in a mass-conservative
simulation.

An alternative approach to upscaling methods for simulations of single and
multi-phase flows
is the multiscale method \cite{hw97,ehw99,hughes95,eh09,chung2016adaptive,jennylt03}.
In multiscale methods, the local solutions on a coarse grid
are used as basis functions in simulating flow equations.
Because subsurface applications require mass conservative velocity fields,
multiscale finite volume \cite{jennylt03,hkj12,ij04,cortinovis2014iterative,lie2017feature},
mixed multiscale finite element methods \cite{ch03,aarnes04,aej07,AKL,chan2015adaptive,chen2016least,chung2016mixed},
mortar multiscale methods  \cite{pwy02, apwy07, pes05,chung2016enriched}, and
various post-processing methods are proposed to guarantee mass conservation
\cite{bush2013application,odsaeter2017postprocessing}.
 Obtaining a mass-conservative
velocity in multiscale simulations requires special formulations.

One of the mass-conservative multiscale approaches that has been developed and widely used
is based on a mixed finite element formulation \cite{ch03,aarnes04,aej07,AKL}.
In a mixed formulation, the flow equation is formulated as two first-order equations
for the pressure and velocity field. The multiscale basis functions are mainly sought
for the velocity field, which is used to approximate the transport equations and requires
mass conservation. Previous approaches developed multiscale basis functions
for the velocity field using local solutions with Neumann boundary conditions and used
piecewise constant basis functions for the pressure equations. In this case, support
of the pressure basis function consists of a single coarse block, while
support of the velocity basis functions consists of two coarse blocks that share a common face.
These multiscale approaches have been widely developed in \cite{ch03,aarnes04,aej07,AKL}
and applied to various
multiphase applications. In these approaches, one multiscale basis function is computed
for each edge (that two coarse blocks share)
and spatially distributed fluxes on an internal face is imposed to reduce the subgrid effects.

In some recent works \cite{chung2016adaptive,egh12},
a Generalized Multiscale Finite Element Method has been proposed
in a mixed formulation \cite{chung2015mixed,efendiev2013mini}. In these papers, the multiscale basis
functions for the velocity are computed, as before, in two coarse blocks that share a common face
and the pressure basis functions are computed in each coarse block. The main difference compared
to previous approaches is that a systematic approach is designed to compute multiple velocity basis
functions. These velocity basis functions are computed by constructing the snapshot spaces
in two coarse-block regions
and then solving local spectral problems in the snapshot spaces.
The typical snapshot functions consist of local solutions with prescribed Neumann boundary conditions
on the common interface between two coarse blocks. The local spectral decomposition
(based on the analysis) identifies appropriate local spaces.
In particular, by ordering the eigenvalues in increasing order, we select
the eigenvectors corresponding to small eigenvalues.
 The convergence analysis
of these approaches is performed in \cite{chung2015mixed}, which shows a spectral convergence
$1/\Lambda$, where $\Lambda$ is the smallest eigenvalue corresponding to the eigenvector that
is not included in the coarse space.
However, this convergence estimate does not include a coarse-mesh dependent convergence.
The latter is our goal in this paper.

This paper follows some of the main concepts developed in \cite{chung2017constraint}
to design
a mixed multiscale method, which has a convergence rate proportional to $H/\Lambda$,
where $H$ is the coarse-mesh size and $\Lambda$ is the smallest eigenvalue that the corresponding
eigenvector is not included in the coarse spaces.
To
obtain first order convergence in terms of the coarse-mesh size, we use some ideas
from \cite{owhadi2014polyharmonic, maalqvist2014localization, owhadi2017multigrid,hou2017}.
The proposed approach requires
some oversampling and a special basis
construction. Our approach and the analysis
significantly differ from our earlier work \cite{chung2017constraint}, where we designed a
continuous Galerkin technique.
Moreover, the proposed method provides a mass conservative velocity
field. Because of the high-contrast in the permeability field, we first need
to identify non-local information that depends on the contrast.
This is done by solving local spectral problems for the pressure field and
Constructing an auxiliary space. The auxiliary space contains non-local information
represented by channels (high-conductivity regions that connect the boundaries of
the coarse block). It is important to separately include this information
in the coarse space (e.g., \cite{ge09_2,eglw11}).
To construct multiscale basis functions,
we solve local problems in a large oversampled region with an
appropriate orthogonality condition. The latter allows fast decay of the ``remaining''
information and takes into account the non-decaying part of the local
solution in the oversampled domain. The proposed method provides a convergence rate
$H/\Lambda$ under certain conditions on the oversampled region.
Here $\Lambda$ is the minimal eigenvalue as discussed above.

We present numerical results and consider three permeability fields. In the first two cases,
the permeability fields have distinct channels and
the number of channels are selected to have a certain number of minimum basis functions
in each coarse block.
Our numerical results show that with an appropriate number of basis
functions, we can achieve first-order convergence with respect to
the coarse-mesh size.
In our third
numerical example, we consider SPE 10 Benchmark tests \cite{cb01}. In this case, the number
of channels is not explicitly identified and we use an auxiliary space with
several basis functions. Our numerical results show that one can achieve less than $5$\% error
in the velocity field
using four dimensional auxiliary space and oversampling.

The paper is organized as follows. In Section \ref{sec:prelim}, we present some preliminaries
and discuss coarse and fine grids.
Section \ref{sec:multiscale} is devoted to describing the multiscale method.
The analysis is presented in Section \ref{sec:analysis}. We present the numerical
results in Section \ref{sec:numresults}.

\section{Preliminaries}
\label{sec:prelim}

Motivated by the importance of mass conservation
for flow problems in heterogeneous media,
we consider a class of high-contrast flow problems in the following mixed form
\begin{equation}
\mbox{div}\, v=f, \quad \kappa^{-1} v = -\nabla p,
\quad\text{in}\quad\Omega,\label{eq:original}
\end{equation}
subject to the homogeneous boundary condition $v\cdot \vec{n}=0$ on
$\partial\Omega$, where $\Omega \subset \mathbb{R}^d$ is the computational domain, $d$ is the dimension
and $\vec{n}$ is
the outward unit normal vector on $\partial\Omega$.
Note that the source function $f$ satisfies $\int_{\Omega} f=0$.
We
assume that $\kappa(x)$ is a heterogeneous coefficient with multiple
scales and very high contrast.
We assume that $1 \leq \kappa(x) \leq B$, where $B$ is large.
Let $V = H(\text{div} ; \Omega)$ and $Q=L^2(\Omega)$.
We define $V_0 = \{ v\in V \; | \; v\cdot \vec{n} = 0, \text{ on } \partial\Omega\}$. Then
we can formulate the problem (\ref{eq:original}) in the following weak formulation:
find $v\in V_0$ and $p\in Q$ such that
\begin{equation}
\label{eq:mixed}
\begin{split}
a(v,w) - b(w,p) &= 0, \quad \forall w\in V_0, \\
b(v,q) &=(f,q), \quad \forall q\in Q,
\end{split}
\end{equation}
where
\begin{equation}
a(v,w) = \int_{\Omega} \kappa^{-1} v\cdot w,
\quad
b(w,q) = \int_{\Omega} q\, \text{div}\, w.
\end{equation}
We remark that the above problem (\ref{eq:mixed}) is solved
together with the condition $\int_{\Omega} p = 0$.
We also remark that the following inf-sup condition holds: for all $q\in L^2(\Omega)$
with $\int_{\Omega} q=0$, we have
\begin{equation}
\label{eq:cont-infsup}
\|q\|_{L^2(\Omega)} \leq C_0 \sup_{v\in V_0} \frac{b(v,q)}{\|v\|_{H(\text{div} ; \Omega)}}
\end{equation}
where $C_0$ is independent of $B$.

We next introduce the notion of fine and
coarse grids. We let $\mathcal{T}^{H}$ be a usual conforming partition
of the computational domain $\Omega$ into finite elements (triangles,
quadrilaterals, tetrahedra, etc.). We refer to this partition as the
coarse grid and assume that each coarse element is partitioned into
a connected union of fine grid blocks, see Figure~\ref{fig:mesh}, where a coarse mesh is shown.
The fine grid partition will
be denoted by $\mathcal{T}^{h}$, and is by definition a refinement
of the coarse grid $\mathcal{T}^{H}$.
We let $N$ be the number of coarse elements and let $N_c$ be the number of coarse
grid nodes of $\mathcal{T}^H$.
We remark that our basis functions are constructed based on the coarse mesh.
In order to compute numerically the basis functions on coarse elements, we need
the underlying fine mesh. In the next section, we will
discuss in detail the construction of basis functions.


\begin{figure}[ht]
\centering
\includegraphics[scale=0.4]{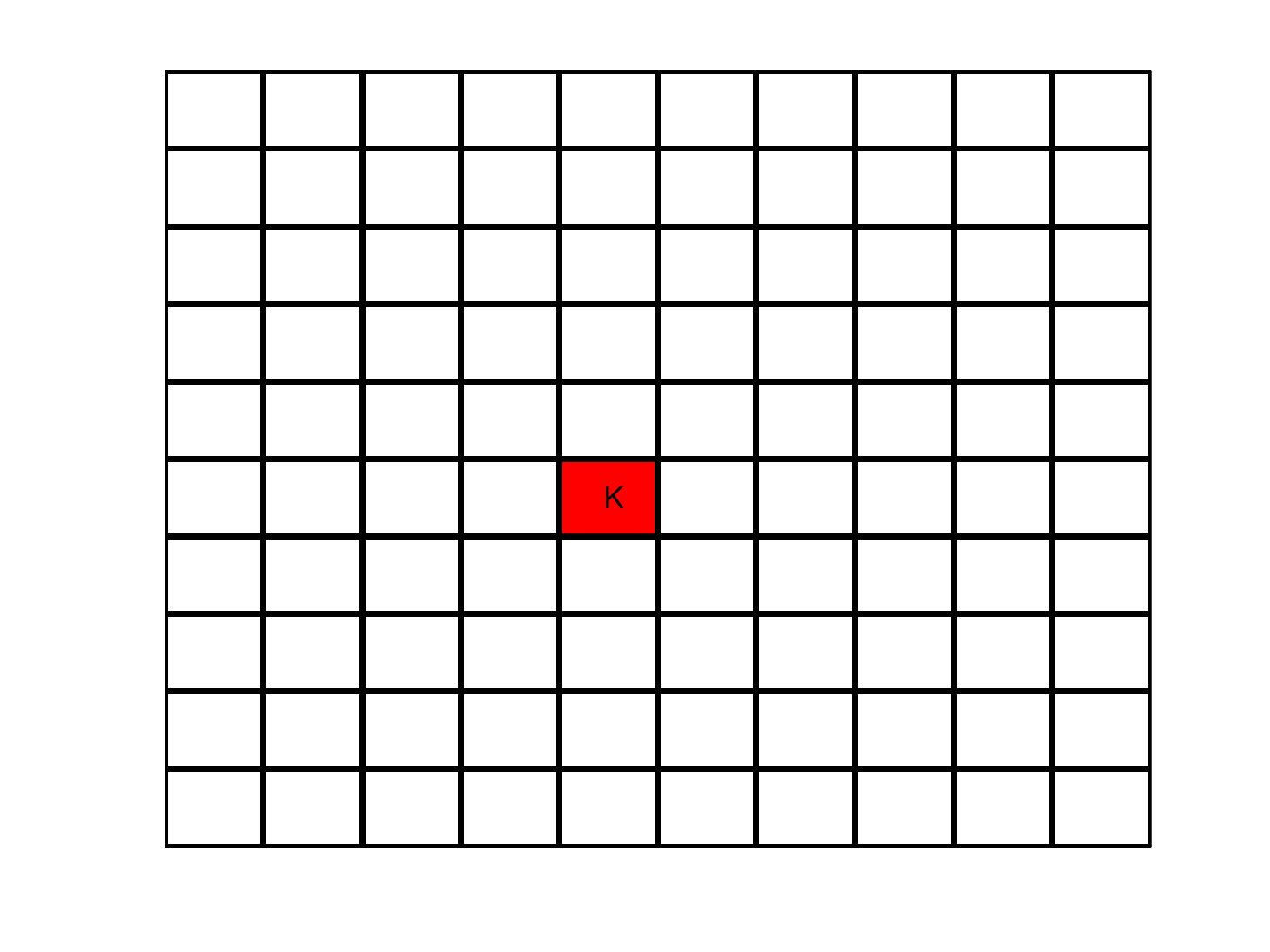}
\includegraphics[scale=0.4]{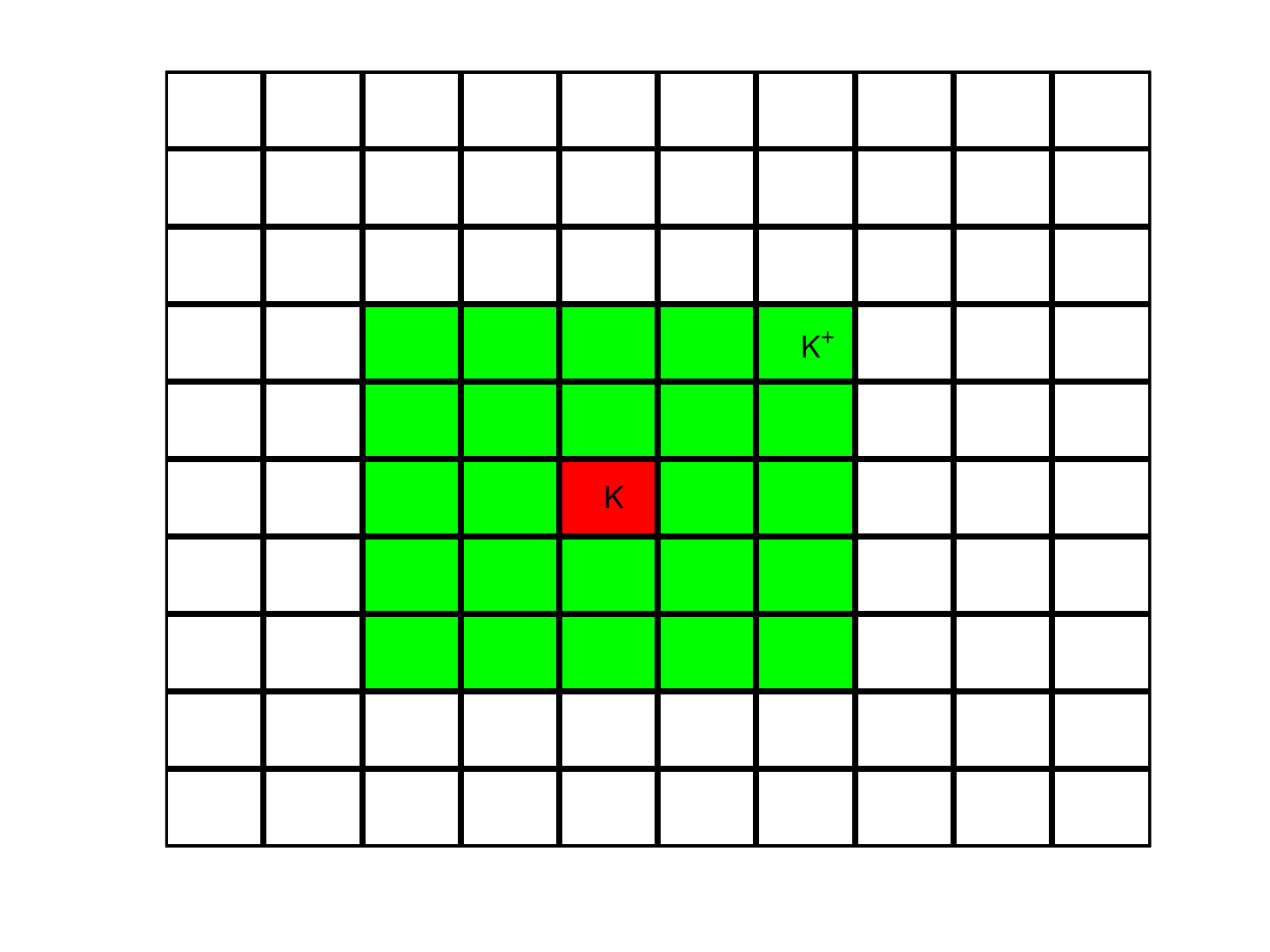}
\caption{An illustration of a coarse element (left) and an oversampled region by extending a coarse element by 2 coarse grid layers
(right).}
\label{fig:mesh}
\end{figure}

\section{The Multiscale Method}
\label{sec:multiscale}

In this section, we will present the construction of our method. The construction
consists of two general steps. In the first step, we will construct a multiscale space
for the pressure $p$. In the second step, we will use the multiscale space for pressure to construct
a multiscale space for the velocity $v$.
We remark that the basis functions for pressure are local, that is, the supports of
pressure basis are the coarse elements. On the other hand, for each pressure basis function,
we will construct a related velocity basis function, whose support is an oversampled region
containing the support of the pressure basis functions.
This oversampled region is usually obtained by enlarging a coarse element by several coarse grid layers,
see Figure~\ref{fig:mesh}.
This localized feature of the velocity basis function
is the key to our method.

\subsection{Pressure basis functions}

In this section,
we will present the construction of the pressure multiscale basis functions.
We will construct a set of auxiliary multiscale basis functions for each coarse element $K_i$
using a spectral problem.
First we define some notations. For a general set $S$, we define $Q(S) = L^2(S)$, and
$V_0(S) = \{ v\in H(\text{div} ; S) \; | \; v\cdot n = 0, \text{ on } \partial S\}$.
Next, we define the required spectral problem.
For each coarse element $K_{i}$, we solve the eigenvalue problem: find $(\phi_{j}^{(i)},p_{j}^{(i)})\in V_0(K{}_{i})\times Q(K_{i})$ and $\lambda^{(i)}_j \in \mathbb{R}$ such that
\begin{equation}
\label{eq:spectral}
\begin{split}
a(\phi_{j}^{(i)},v)-b(v,p_{j}^{(i)}) & =0, \quad\;\forall v\in V_{0}(K_{i}),\\
b(\phi_{j}^{(i)},q) & =\lambda_{j}^{(i)}s_i(p_{j}^{(i)},q), \quad \;\forall q\in Q(K_{i}),
\end{split}
\end{equation}
where
\begin{equation}
\label{eq:s}
s_i(p,q)=\int_{K_i}\tilde{\kappa} pq, \quad
\tilde{\kappa} = \kappa\sum_{j=1}^{N_c} |\nabla\chi_{j}|^{2}
\end{equation}
and $\{ \chi_j\}$ is the set of standard multiscale basis functions, which satisfy the partition
of unity property. One can also use other types of partition of unity functions, for example,
the set of standard bilinear basis functions.
Assume that $s_i(p_j^{(i)},p_j^{(i)})=1$ and that
we arrange the eigenvalues of (\ref{eq:spectral}) in non-decreasing order
$0=\lambda_{1}\leq\lambda_{2}\leq\dots$.
To define the required multiscale basis functions, we pick
the first $J_i$ eigenfunctions $\{ p_j^{(i)}\}$ and define
the local auxiliary
multiscale finite element space $Q_{aux}(K_{i})$
by
\begin{align*}
Q_{aux}(K_{i}) & =\text{span}\{p_{j}^{(i)}|1\leq j\leq J_i\}.
\end{align*}
The (global) auxiliary multiscale finite element space $Q_{aux}$ is defined
by $Q_{aux}=\oplus_{i}Q_{aux}(K_{i})$.
Notice that, the space $Q_{aux}$ will be used as the approximation space for the pressure.

\subsection{Velocity basis functions}

In this section, we will present the construction of the velocity basis functions.
For each of the pressure basis function in $Q_{aux}$, we will construct a corresponding
velocity basis function, whose support is an oversampled region containing the support
of the pressure basis function (see Figure~\ref{fig:mesh}).

First of all, we
define the interpolation operator $\pi:Q\rightarrow Q_{aux}$
by
\[
\pi(q)=\sum_{i=1}^N\sum_{j=1}^{J_i} \cfrac{s_{i}(q,p_{j}^{(i)})}{s_{i}(p_{j}^{(i)},p_{j}^{(i)})}p_{j}^{(i)},\quad\;\forall q\in Q.
\]
Note that $\pi$ is the $L^2$ projection of $Q$ onto $Q_{aux}$ with respect to the inner product
$s(p,q) = \sum_{i=1}^N s_i(p,q)$, where $s_i(p,q)$ is
defined in (\ref{eq:s}).

Next, we give two types of velocity multiscale basis functions.
These two types are motivated by the two types of constructions as in \cite{chung2017constraint}.
Let $p^{(i)}_j \in Q_{aux}$ be a given pressure basis function supported in $K_i$.
We let $K_i^+$ be the corresponding oversampled region, see Figure~\ref{fig:mesh}.
We will define a velocity basis function $\psi_{j,ms}^{(i)} \in V_0(K_i^+)$ by
solving the following problem.
The multiscale space is defined as $V_{ms}=\text{span}\{\psi_{j,ms}^{(i)}\}$.
Note that the basis function is supported in $K_i^+$,
which is a union of connected coarse elements and contains $K_i$.
We define $L_i$ as the set of indices such that if $k\in L_i$, then $K_k \subset K_i^+$.
We also define
$Q_{aux}(K_i^+) = \text{span} \{ p_j^{(k)} \; | \; 1\leq j\leq J_k, k\in L_i\}$.


\begin{itemize}
\item Type 1 basis functions

We find $\psi_{j,ms}^{(i)} \in V_0(K_i^+)$, $q_{j,ms}^{(i)} \in Q(K_i^+)$
and $\mu_{j,ms}^{(i)} \in Q_{aux}(K_i^+)$
such that
\begin{align}
a(\psi_{j,ms}^{(i)},v)-b(v,q_{j,ms}^{(i)}) & =0,\quad\;\forall v\in V_0(K_i^+), \label{eq:type1-a} \\
b(\psi_{j,ms}^{(i)},q)-s(\mu_{j,ms}^{(i)},\pi q) & =0,\quad\;\forall q\in Q(K_i^+), \label{eq:type1-b} \\
s(\pi q_{j,ms}^{(i)},\gamma) & = s(p_j^{(i)}, \gamma), \quad\;\forall \gamma \in Q_{aux}(K_i^+).\label{eq:type1-c}
\end{align}

\item Type 2 basis functions

We find $\psi_{j,ms}^{(i)} \in V_0(K_i^+)$ and $q_{j,ms}^{(i)} \in Q(K_i^+)$ such that
\begin{align}
a(\psi_{j,ms}^{(i)},v)-b(v,q_{j,ms}^{(i)}) & =0, \quad\;\forall v\in V_0(K_i^+), \label{eq:multiscale1} \\
s(\pi q_{j,ms}^{(i)},\pi q)+b(\psi_{j,ms}^{(i)},q) & =s(p_{j}^{(i)},q), \quad\;\forall q\in Q(K_i^+).\label{eq:multiscale2}
\end{align}

\end{itemize}


We remark that the Type 1 basis functions are motivated by the constraint energy minimization problem
defined in \cite{chung2017constraint} (more precisely, equation (7) in \cite{chung2017constraint}).
One can show that the variational formulation of the corresponding mixed formulation of the minimization
problem is exactly given by (\ref{eq:type1-a})-(\ref{eq:type1-c}).
The Type 2 basis functions are motivated by the unconstraint energy minimization problem in
\cite{chung2017constraint} (more precisely, equation (24) in \cite{chung2017constraint}).
One can show that the variational formulation of the corresponding mixed formulation of the minimization
problem is exactly given by (\ref{eq:multiscale1})-(\ref{eq:multiscale2}).

In the following, we will consider only the Type 2 basis functions, due to its better performance as shown in \cite{chung2017constraint}.
Notice that, we can define the basis function $\psi_{j,ms}^{(i)}$ in the local region $K_i^+$
is because of a localization property of the related global basis function $\psi_j^{(i)}$.
The global basis function $\psi_j^{(i)} \in V_0$ is constructed by solving
the following problem. The global multiscale space is defined as $V_{glo}=\text{span} \{ \psi_j^{(i)}\}$.

\begin{itemize}
\item Type 2 basis functions (global)

We find $\psi_{j}^{(i)} \in V_0$ and $q_{j}^{(i)} \in Q$ such that
\begin{align}
a(\psi_{j}^{(i)},v)-b(v,q_{j}^{(i)}) & =0, \quad\;\forall v\in V_0, \label{eq:global1} \\
s(\pi q_{j}^{(i)},\pi q)+b(\psi_{j}^{(i)},q) & =s(p_{j}^{(i)},q), \quad\;\forall q\in Q. \label{eq:global2}
\end{align}

\end{itemize}

Notice that the system (\ref{eq:global1})-(\ref{eq:global2}) defines a mapping $G$
from $Q_{aux}$ to $V_{glo}\times Q$. In particular, given $p_{aux}\in Q_{aux}$,
the image $G(p_{aux}) = (\psi,r) \in V_{glo}\times Q$ is defined by
\begin{align}
a(\psi,v)-b(v,r) & =0, \quad\;\forall v\in V_0, \label{eq:global3} \\
s(\pi r,\pi q)+b(\psi,q) & =s(p_{aux},q), \quad\;\forall q\in Q. \label{eq:global4}
\end{align}
We remark that the mapping $\psi = G_1(p_{aux})$ is surjective.

Next, we give a characterization of the space $V_{glo}$ in the following lemma.

\begin{lemma}
\label{lem:Vglo}
Let $V_{glo}$ be the global multiscale space for the velocity. For each $p_{aux} \in Q_{aux}$
with $s(p_{aux},1)=0$, there is a unique $u\in V_{glo}$
such that $(u,p)\in V_0\times Q$ is the solution of
\begin{equation}
\label{eq:aux-mixed}
\begin{split}
a(u,v) - b(v,p) &= 0, \quad\forall v\in V_0, \\
b(u,q) &= s(p_{aux},q), \quad\forall q\in Q,
\end{split}
\end{equation}
and $\int_\Omega p=0$.
\end{lemma}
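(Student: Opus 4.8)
The plan is to show that the velocity $u$ solving (\ref{eq:aux-mixed}) is exactly the velocity component $G_1(\tilde p)$ produced by the global problem (\ref{eq:global3})--(\ref{eq:global4}) for a suitably chosen source $\tilde p\in Q_{aux}$, and that such a $\tilde p$ exists precisely because $s(p_{aux},1)=0$. I would first record the classical well-posedness of (\ref{eq:aux-mixed}): since $1\le\kappa\le B$, the form $a$ is coercive on the kernel of $b$ (there $\operatorname{div}v=0$), and together with the inf-sup condition (\ref{eq:cont-infsup}) this yields, for every $p_{aux}\in Q_{aux}$ meeting the compatibility condition $\int_\Omega\tilde\kappa p_{aux}=s(p_{aux},1)=0$ (which is exactly the hypothesis), a unique $(u,p)\in V_0\times Q$ solving (\ref{eq:aux-mixed}) with $\int_\Omega p=0$. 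This already settles the uniqueness claim (any $u\in V_{glo}\subset V_0$ solving it must be \emph{the} solution), so the real content is the membership $u\in V_{glo}$.

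The key step is a reinterpretation of the map $G$. Given $p_{aux}\in Q_{aux}$, let $(\psi,r)=G(p_{aux})$ as in (\ref{eq:global3})--(\ref{eq:global4}). Since $\pi$ is the $s$-orthogonal projection onto $Q_{aux}$ and $\pi r\in Q_{aux}$, one has $s(\pi r,\pi q)=s(\pi r,q)$ for all $q\in Q$, so (\ref{eq:global4}) becomes $b(\psi,q)=s(p_{aux}-\pi r,q)$ for all $q\in Q$. Set $\hat p:=p_{aux}-\pi r\in Q_{aux}$; this defines a linear operator $T:Q_{aux}\to Q_{aux}$, $T p_{aux}=\hat p$. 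Testing with $q=1$, using that constants lie in $Q_{aux}$ (the $\lambda_1^{(i)}=0$ eigenfunction of (\ref{eq:spectral}) is constant on $K_i$, hence $\pi 1=1$) and $b(\psi,1)=0$ for $\psi\in V_0$, gives $s(\hat p,1)=0$. Comparing with (\ref{eq:global3}), the pair $(\psi,r)$ then solves (\ref{eq:aux-mixed}) with $p_{aux}$ replaced by $\hat p$, so by the uniqueness above $\psi$ is the velocity $u_{\hat p}$ associated to $\hat p$. Consequently $V_{glo}=\{\,u_{\hat p}:\hat p\in\operatorname{Range}(T)\,\}$, and the lemma reduces to showing that every $p_{aux}\in Q_{aux}$ with $s(p_{aux},1)=0$ lies in $\operatorname{Range}(T)$.

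This last point is where I expect the main work. The identity $s(\hat p,1)=0$ already gives $\operatorname{Range}(T)\subseteq\{q\in Q_{aux}:s(q,1)=0\}$, a subspace of codimension one in $Q_{aux}$ (the functional $q\mapsto s(q,1)=\int_\Omega\tilde\kappa q$ is nonzero on $Q_{aux}$), so it suffices to prove $\dim\ker T=1$. If $T p_{aux}=0$ then $\operatorname{div}\psi=0$; testing (\ref{eq:global3}) with $v=\psi$ gives $a(\psi,\psi)=b(\psi,r)=0$, hence $\psi=0$ by coercivity; then (\ref{eq:global3}) together with (\ref{eq:cont-infsup}) forces $r$ to be a constant, and substituting back into (\ref{eq:global4}) gives $s(p_{aux}-r,q)=0$ for all $q\in Q$, whence $p_{aux}=r$ is a global constant, using that $s$ restricts to an inner product on $Q_{aux}$ (the $p_j^{(i)}$ being $s$-orthonormal). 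Since conversely $G(1)=(0,1)$, i.e. $T 1=0$, we conclude $\ker T=\operatorname{span}\{1\}$, hence $\operatorname{Range}(T)=\{q\in Q_{aux}:s(q,1)=0\}$. Finally, given $p_{aux}\in Q_{aux}$ with $s(p_{aux},1)=0$, choose $\tilde p\in Q_{aux}$ with $T\tilde p=p_{aux}$; then $G_1(\tilde p)=u_{p_{aux}}=u\in V_{glo}$, which combined with the first paragraph proves the lemma. The only delicate ingredient is the dimension count for $\ker T$, specifically the passage from "$\operatorname{div}\psi=0$'' to "$\psi=0$ and $r$ constant,'' which relies on the coercivity of $a$ and the continuous inf-sup estimate rather than on any localization.
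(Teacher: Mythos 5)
Your proof is correct and follows essentially the same route as the paper: both arguments reduce the lemma to showing that the kernel of the map $p_{aux}\mapsto\psi$ (equivalently, of your operator $T$) consists exactly of the constants, and then conclude by a dimension count against the codimension-one subspace $\{q\in Q_{aux}: s(q,1)=0\}$. Your packaging via $\operatorname{Range}(T)$ merely makes explicit the paper's terser claims that $V_{glo}\subset\widehat{V}$ and $\dim\widehat{V}=\dim(Q_{aux})-1$.
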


\begin{proof}
We define the space $\widehat{V} \subset V_0$ using the following construction.
We can define a mapping $p_{aux} \mapsto u$ using the system (\ref{eq:aux-mixed}),
and then define $\widehat{V}$ as the image of this map.
It suffices to show that $V_{glo}=\widehat{V}$.

It is clear that $V_{glo} \subset \widehat{V}$. By the construction of the above map, we have $\text{dim}(\widehat{V})=
\text{dim}(Q_{aux})-1$. Thus, we will prove next that $\text{dim}(V_{glo})=
\text{dim}(Q_{aux})-1$.
To do so, we will show that the null space of the mapping $G_1$ has dimension one.
Assume that $p_{null} \in Q_{aux}$ is in the null space of $G_1$.
Let $(\psi_{null},q_{null}) = G(p_{null})$ and $\psi_{null} = G_1(p_{null})$.
By definition of $G$,
\begin{align}
a(\psi_{null},v)-b(v,q_{null}) & =0, \quad\;\forall v\in V_0, \label{eq:global5} \\
s(\pi q_{null},\pi q)+b(\psi_{null},q) & =s(p_{null},q), \quad\;\forall q\in Q. \label{eq:global6}
\end{align}
By assumption, $\psi_{null}=0$. By (\ref{eq:global5}), we have
\begin{equation*}
b(v,q_{null}) = 0, \quad\forall v\in V_0.
\end{equation*}
Since $v\in V_0$, we have
\begin{equation*}
b(v,q_{null} - \overline{q}_{null}) = 0, \quad\forall v\in V_0
\end{equation*}
where $\overline{q}_{null}$ is the mean value of $q_{null}$.
By the inf-sup condition \ref{eq:cont-infsup}, we conclude that $q_{null} = \overline{q}_{null}$,
which implies that $q_{null}$ is a constant function.
Using (\ref{eq:global6}) and $Q_{aux}$ contains constant functions, we have
\begin{equation*}
s(q_{null},q) = s(p_{null},q).
\end{equation*}
Thus, $p_{null}$ is a constant function. Therefore, we conclude that the dimension
of the null space of $G_1$ is one.

\end{proof}

\subsection{The method}

From the above, we have the multiscale spaces $Q_{aux}$ and $V_{ms}$
for the approximation of pressure and velocity. The multiscale solution $(v_{ms},p_{ms}) \in V_{ms}\times Q_{aux}$
is obtained by solving
\begin{align}
a(u_{ms},v)-b(v,p_{ms}) & =0, \quad\;\forall v\in V_{ms} \label{eq:method1} \\
b(u_{ms},q) & =(f,q), \quad\;\forall q\in Q_{aux} \label{eq:method2}
\end{align}
To analyze the method, we will first define some norms for our spaces.
We define the norms $\|\cdot\|_{V}$ and $\|\cdot\|_{a}$ for $V_0$ by
\begin{align*}
\|v\|^2_{V} &= \int_{\Omega} \tilde{\kappa}^{-1} |\nabla \cdot v|^2 +\int_{\Omega} \kappa^{-1} |v|^2, \\
\|v\|^2_{a} &= \int_{\Omega} \kappa^{-1} |v|^2.
\end{align*}
Next, we define a norm $\|\cdot\|_s$ for the space $Q$ by
\begin{align*}
\|q\|^2_{s} &= \int_{\Omega} \tilde{\kappa} |q|^2.
\end{align*}
For a given subregion $D\subset\Omega$, we will define a local version of the norms $\|\cdot\|_{a(D)}$, $\|\cdot\|_{V(D)}$ and $\|\cdot\|_{s(D)}$ by
\begin{align*}
\|v\|^2_{V(D)} &= \int_{D} \tilde{\kappa}^{-1} |\nabla \cdot v|^2 +\int_{D} \kappa^{-1} |v|^2 \\
\|v\|^2_{a(D)} &= \int_{D} \kappa^{-1} |v|^2 \\
\|q\|^2_{s(D)} &= \int_{D} \tilde{\kappa} |q|^2.
\end{align*}
Finally, $\|\cdot\|_{L^2(D)}$ denotes the standard $L^2$ norm on $D$.

\section{Analysis}
\label{sec:analysis}

The analysis consists of several steps. In Section \ref{sec:a1}, we will analyze the stability
and the convergence of using the global basis functions.
Then in Section \ref{sec:a2}, we will analyze the well-posedness of
the construction of the multiscale basis functions. We will in Section \ref{sec:a3} prove
a decay property of the global multiscale basis functions, and justify the localization procedure.
Finally, in Section \ref{sec:a4}, we will analyze the stability
and the convergence of using the multiscale basis functions.

For our analysis, we define
\begin{equation*}
\Lambda = \min_{1\leq i\leq N} \lambda_{J_i+1}^{(i)}.
\end{equation*}
Moreover, we define the snapshot solution
$(u_{snap},p_{snap})\in V_0\times Q $ by
\begin{align}
a(u_{snap},v)+b(v,p_{snap}) & =0, \quad\;\forall v\in V_0,\label{eq:snap1} \\
b(u_{snap},q) & =s(\pi(\tilde{\kappa}^{-1} f),q), \quad \;\forall q\in Q. \label{eq:snap2}
\end{align}
The well-posedness of (\ref{eq:snap1})-(\ref{eq:snap2}) is standard.
Note that $s(\pi(\tilde{\kappa}^{-1} f),q) = (f,\pi q)$ for all $q\in Q$.
We remark that $(u_{snap},p_{snap})$ is considered as the reference solution is our analysis, and
we will estimate the differences $u_{snap}-u_{ms}$ and $p_{snap}-p_{ms}$.
In the following lemma, we will show that the differences $u-u_{snap}$ and $p-p_{snap}$ are small.
That is, the snapshot error is small.

\begin{lemma}
Let $(u_{snap},p_{snap})$ be the snapshot solution defined in (\ref{eq:snap1})-(\ref{eq:snap2})
and let $(u,p)$ be the exact solution defined in (\ref{eq:mixed}). Then we have
\begin{equation}
\label{eq:sbound1}
\| u - u_{snap}\|_a^2 + \| p - p_{snap}\|_{L^2(\Omega)}^2
\leq \frac{1}{\Lambda} \| (I-\pi) ( \tilde{\kappa}^{-1} f) \|_{s}^2.
\end{equation}
In addition, we have
\begin{equation}
\label{eq:sbound2}
\| u - u_{snap}\|_V^2
\leq (1+\frac{1}{\Lambda}) \| (I-\pi) ( \tilde{\kappa}^{-1} f) \|_{s}^2.
\end{equation}
\end{lemma}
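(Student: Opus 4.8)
The plan is to compare the two mixed systems directly: the exact solution $(u,p)$ solves $a(u,v)-b(v,p)=0$, $b(u,q)=(f,q)$, while the snapshot solution $(u_{snap},p_{snap})$ solves $a(u_{snap},v)+b(v,p_{snap})=0$, $b(u_{snap},q)=(f,\pi q)$ (using the identity $s(\pi(\tilde\kappa^{-1}f),q)=(f,\pi q)$). First I would address the apparent sign discrepancy between $-b(v,p)$ in \eqref{eq:mixed} and $+b(v,p_{snap})$ in \eqref{eq:snap1}, i.e. absorb it by replacing $p_{snap}$ with $-p_{snap}$ or noting the sign convention; then the difference $(e_u,e_p)=(u-u_{snap},p-p_{snap})$ satisfies $a(e_u,v)-b(v,e_p)=0$ for all $v\in V_0$ and $b(e_u,q)=(f,(I-\pi)q)$ for all $q\in Q$.

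The key observation is that $(f,(I-\pi)q)=(f,(I-\pi)q)=s(\tilde\kappa^{-1}f,(I-\pi)q)=s((I-\pi)(\tilde\kappa^{-1}f),(I-\pi)q)=s((I-\pi)(\tilde\kappa^{-1}f),q)$, since $\pi$ is the $s$-orthogonal projection onto $Q_{aux}$. So the right-hand side of the second equation involves only $g:=(I-\pi)(\tilde\kappa^{-1}f)$, which lies in the orthogonal complement of $Q_{aux}$. Testing the first equation with $v=e_u$ and the second with $q=e_p$ and subtracting gives $\|e_u\|_a^2 = b(e_u,e_p) = s(g,e_p)$. The crucial step is then a spectral Poincaré-type bound: for $q\in Q$ with $\pi q = 0$ one has, on each coarse block $K_i$, $\|q\|_{L^2(K_i)}^2$ controlled via the eigenvalue problem \eqref{eq:spectral} by $\Lambda^{-1}$ times the $s$-norm restricted appropriately — more precisely, I expect an inequality of the form $\|(I-\pi)q\|_{L^2(\Omega)}^2 \le \Lambda^{-1}\|q\|_s^2$ or an inf-sup/divergence estimate letting us bound $b(e_u,e_p)$ in terms of $\Lambda^{-1/2}\|g\|_s \|e_u\|_V$. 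This is the main obstacle: establishing the correct $1/\Lambda$ factor linking the $s$-inner product pairing against $g\perp Q_{aux}$ to the velocity/pressure error, using that $g$ is $s$-orthogonal to the first $J_i$ eigenfunctions so that the Rayleigh quotient bound kicks in.

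Concretely, I would argue as follows. Since $\pi g = 0$, on each $K_i$ we can expand $g$ in the eigenbasis $\{p_j^{(i)}\}_{j>J_i}$; using the second line of \eqref{eq:spectral}, for any $q$ there is $v\in V_0(K_i)$ with $b(v,q)=s(\text{(high modes of }q),\cdot)$ and $\|v\|_a^2 \le \lambda^{-1}\le \Lambda^{-1}$ times the corresponding $s$-energy — this is the standard CEM-type argument giving $\sup_{v}\frac{b(v,g)}{\|v\|_a}\le \Lambda^{-1/2}\|g\|_s$ (or the dual statement $\|g\|_{\text{something}}\le \Lambda^{-1/2}\|g\|_s$). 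Combining $\|e_u\|_a^2 = s(g,e_p)$ with a bound on $\|e_p\|$: from $a(e_u,v)=b(v,e_p)$ and the inf-sup condition \eqref{eq:cont-infsup}, $\|e_p\|_{L^2} \le C_0 \sup_v b(v,e_p)/\|v\|_{H(\text{div})} = C_0\sup_v a(e_u,v)/\|v\|_{H(\text{div})} \le C_0\|e_u\|_a$ (after checking $\|\cdot\|_a\lesssim\|\cdot\|_{H(\mathrm{div})}$ via $\kappa\ge 1$, and here actually we want it the other way, so care with constants — likely the paper's norms are arranged so $C_0$ is clean). Then $\|e_u\|_a^2 \le \Lambda^{-1/2}\|g\|_s\cdot(\text{something}\lesssim\|e_u\|_a + \|g\|_s)$, and closing the estimate via Young's inequality yields $\|e_u\|_a^2 + \|e_p\|_{L^2}^2 \le \Lambda^{-1}\|g\|_s^2$, which is \eqref{eq:sbound1}.

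For \eqref{eq:sbound2}, I would additionally control $\|e_u\|_V^2 = \|e_u\|_a^2 + \int_\Omega \tilde\kappa^{-1}|\nabla\cdot e_u|^2$. The extra term is handled by noting $\nabla\cdot e_u$ is determined by the second equation: $\int_{K_i}(\nabla\cdot e_u) q = s(g,q)$ for $q\in Q(K_i)$ shows $\nabla\cdot e_u = \tilde\kappa g$ pointwise (taking $q$ to range over all of $Q(K_i)$), hence $\int_\Omega \tilde\kappa^{-1}|\nabla\cdot e_u|^2 = \int_\Omega \tilde\kappa |g|^2 = \|g\|_s^2$. Adding this to the $\|e_u\|_a^2 \le \Lambda^{-1}\|g\|_s^2$ bound gives $\|e_u\|_V^2 \le (1+\Lambda^{-1})\|g\|_s^2$, which is exactly \eqref{eq:sbound2}. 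The only delicate point in this last part is confirming that $\nabla\cdot e_u = \tilde\kappa g$ globally (consistency of the local divergence identities across $\Omega$), which follows since $Q=L^2(\Omega)$ has no inter-element constraints and the test space is the full $Q$.
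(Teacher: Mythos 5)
Your proposal is correct and follows essentially the same route as the paper: the same error system, the same energy identity $\|u-u_{snap}\|_a^2 = s\big((I-\pi)(\tilde\kappa^{-1}f),\,p-p_{snap}\big)$, the same eigenfunction-based test-velocity construction that produces the $\Lambda^{-1/2}$ factor, the inf-sup condition (\ref{eq:cont-infsup}) for the $L^2$ pressure bound, and the pointwise identity $\nabla\cdot(u-u_{snap})=\tilde\kappa\,(I-\pi)(\tilde\kappa^{-1}f)$ for the $V$-norm estimate. The only cosmetic differences are that the paper builds its test field $\mu_i$ from the high eigenmodes of $(I-\pi)(p-p_{snap})$ rather than of $(I-\pi)(\tilde\kappa^{-1}f)$, and closes the velocity bound by direct cancellation rather than Young's inequality; both variants work.
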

\begin{proof}
First, we have
\begin{align}\label{eq:snap3}
a(u-u_{snap},v)-b(v,p-p_{snap}) & =0, \quad\;\forall v\in V_{0},\\
b(u-u_{snap},q) & =\int_{\Omega}f(I-\pi)q, \quad\;\forall q\in Q. \label{eq:snap4}
\end{align}
Taking $v=u-u_{snap}$ and $q=p-p_{snap}$ in the above system, and summing the two equations,
we have
\begin{align*}
a(u-u_{snap},u-u_{snap}) & =\int_{\Omega}f(I-\pi)(p-p_{snap})\\
 & \leq\| (I-\pi)(\tilde{\kappa}^{-1} f)\|_{s} \, \|(I-\pi)(p-p_{snap})\|_{s}.
\end{align*}
For each coarse element $K_i$, we define $w_i$ be the restriction of $(I-\pi)(p-p_{snap})$ on $K_i$.
Then, by the definition of $\pi$, we can write
\begin{equation*}
w_i = \sum_{j>J_i} c_j^{(i)} p_j^{(i)}.
\end{equation*}
We define $\mu_i \in V_0(K_i)$ as
\begin{equation*}
\mu_i = \sum_{j>J_i} (\lambda_j^{(i)})^{-1} c_j^{(i)} \phi_j^{(i)}.
\end{equation*}
Letting $v = \mu_i$ in the first equation of (\ref{eq:snap3}), we have
\begin{equation*}
a(u-u_{snap},\mu_i)-b(\mu_i,p-p_{snap})  =0.
\end{equation*}
Using the spectral problem (\ref{eq:spectral}), we have
\begin{equation*}
b(\mu_i,p-p_{snap}) = b(\mu_i,(I-\pi)(p-p_{snap})) = \sum_{j>J_i} (c_j^{(i)})^2 = \| (I-\pi)(p-p_{snap})\|_s^2.
\end{equation*}
In addition, we have $a(u-u_{snap},\mu_i) \leq \| u-u_{snap}\|_a \, \|\mu_i\|_a$. Then
using (\ref{eq:spectral}), we have
\begin{equation*}
  \|\mu_i\|_a^2 \leq \frac{1}{\Lambda}
\sum_{j>J_i} (c_j^{(i)})^2
= \frac{1}{\Lambda}  \| (I-\pi)(p-p_{snap})\|_s^2.
\end{equation*}
Combining the above results, we obtain
\begin{equation}
\label{eq:snapbound}
\|(I-\pi)(p-p_{snap})\|_{s} \leq \frac{1}{\Lambda^{\frac{1}{2}}} \, \|u-u_{snap}\|_a.
\end{equation}
Hence we proved (\ref{eq:sbound1}).

Using the inf-sup condition (\ref{eq:cont-infsup}) and (\ref{eq:snap3}), we have
\[
\|p-p_{snap}\|_{L^{2}(\Omega)}\leq C_{0}\sup_{v\in V_{0}}\cfrac{b(v,p-p_{snap})}{\|v\|_{H(\text{div},\Omega)}}\leq C_{0}\|u-u_{snap}\|_{a}.
\]
This proves (\ref{eq:sbound1}).

Finally, using (\ref{eq:snap4}), we have
\begin{equation*}
\|\tilde{\kappa}^{-\frac{1}{2}}\nabla\cdot(u-u_{snap})\|_{L^{2}(\Omega)}=\|(I-\pi)(\tilde{\kappa}^{-1}f)\|_{s}.
\end{equation*}
This proves (\ref{eq:sbound2}).

\end{proof}

The above lemma shows that, when the partition of unity functions $\{ \chi_i\}$
is chosen as the standard piecewise bilinear functions, we obtain the convergence rate
$\| u - u_{snap}\|_V + \| p - p_{snap}\|_{L^2(\Omega)} = O(H)$,
which is independent of the contrast.

\subsection{Stability and convergence of using global basis functions}\label{sec:a1}

In this section, we consider the use of the
global basis functions to solve the problem. In particular, we approximate
the problem (\ref{eq:original}) using the space $Q_{aux}$ for pressure and $V_{glo}$ for velocity.
So, we define the solution $(u_{glo},p_{glo}) \in V_{glo}\times Q_{aux}$
using global basis function by the following
\begin{align}
a(u_{glo},v)+b(v,p_{glo}) & =0,\quad\;\forall v\in V_{glo}. \label{eq:glo1} \\
b(u_{glo},q) & =(f,q),\quad\;\forall q\in Q_{aux}. \label{eq:glo2}
\end{align}
Note that $s(\pi (\tilde{\kappa}^{-1} f),q) = (f,q)$ for all $q\in Q_{aux}$.
We will prove the stability and the convergence of (\ref{eq:glo1})-(\ref{eq:glo2}).

First, we prove the following inf-sup condition.
\begin{lemma}
For every $p_{aux}\in Q_{aux}$ with $s(p_{aux},1)=0$, there is $w \in V_{glo}$ such that
\begin{equation}
\label{eq:glo-infsup}
\| p_{aux}\|_s \leq C_{glo} \frac{b(w,p_{aux})}{\|w\|_{V}}.
\end{equation}
\end{lemma}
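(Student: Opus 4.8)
The plan is to prove the discrete inf‑sup condition on $V_{glo}\times Q_{aux}$ by exhibiting, for each $p_{aux}\in Q_{aux}$ with $s(p_{aux},1)=0$, an explicit velocity $w\in V_{glo}$ that ``tests well'' against $p_{aux}$. The natural candidate is $w = G_1(p_{aux})$, i.e. the first component of the global map $G$ defined by (\ref{eq:global3})--(\ref{eq:global4}). By Lemma \ref{lem:Vglo}, this $w$ together with the associated pressure $p$ solves the mixed system (\ref{eq:aux-mixed}); in particular $b(w,q)=s(p_{aux},q)$ for all $q\in Q$, so choosing $q=p_{aux}$ gives the crucial identity $b(w,p_{aux}) = s(p_{aux},p_{aux}) = \|p_{aux}\|_s^2$. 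Thus the numerator in (\ref{eq:glo-infsup}) is exactly $\|p_{aux}\|_s^2$, and it remains to bound $\|w\|_V$ from above by $C_{glo}\|p_{aux}\|_s$.

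First I would estimate $\|w\|_a$. Testing the first equation of (\ref{eq:aux-mixed}) with $v=w$ and the second with $q=p$ and subtracting (or adding, matching signs) yields $a(w,w) = b(w,p) = s(p_{aux},p)$. Hmm — this introduces $p$, so I need a handle on $\|p\|_s$ or better on $\|\pi p\|_s$ (since $s(p_{aux},p) = s(p_{aux},\pi p)$ because $p_{aux}\in Q_{aux}$ and $\pi$ is the $s$‑orthogonal projection onto $Q_{aux}$). So $a(w,w) = s(p_{aux},\pi p) \le \|p_{aux}\|_s \,\|\pi p\|_s$. To control $\|\pi p\|_s$ I would use the spectral problem: on each coarse element $K_i$, for any $\gamma\in Q_{aux}(K_i)$ there is a bounded $v_\gamma\in V_0(K_i)$ with $b(v_\gamma,\gamma) = s(\gamma,\gamma)$ and $\|v_\gamma\|_a^2 \le \Lambda^{-1}\|\gamma\|_s^2$ — exactly the construction $\mu_i = \sum (\lambda_j^{(i)})^{-1} c_j^{(i)}\phi_j^{(i)}$ used in the snapshot lemma. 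Applying this with $\gamma = \pi p|_{K_i}$ and using the first equation of (\ref{eq:aux-mixed}) with $v=v_\gamma$ gives $\|\pi p\|_s^2 = b(v_\gamma,\pi p) = b(v_\gamma,p) = a(w,v_\gamma) \le \|w\|_a\|v_\gamma\|_a \le \Lambda^{-1/2}\|w\|_a\|\pi p\|_s$, hence $\|\pi p\|_s \le \Lambda^{-1/2}\|w\|_a$. Feeding this back, $a(w,w) \le \Lambda^{-1/2}\|p_{aux}\|_s\|w\|_a$, so $\|w\|_a \le \Lambda^{-1/2}\|p_{aux}\|_s$.

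Next I would estimate the divergence part $\|\tilde\kappa^{-1/2}\nabla\cdot w\|_{L^2}$. From the second equation of (\ref{eq:aux-mixed}), $\nabla\cdot w$ is characterized by $\int_\Omega q\,\nabla\cdot w = s(p_{aux},q)$ for all $q\in Q$; taking $q = \tilde\kappa^{-1}\nabla\cdot w$ gives $\int_\Omega \tilde\kappa^{-1}|\nabla\cdot w|^2 = s(p_{aux}, \tilde\kappa^{-1}\nabla\cdot w) = \int_\Omega p_{aux}\nabla\cdot w \le \|p_{aux}\|_s\,\|\tilde\kappa^{-1/2}\nabla\cdot w\|_{L^2}$, so $\|\tilde\kappa^{-1/2}\nabla\cdot w\|_{L^2} \le \|p_{aux}\|_s$. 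Combining with the $\|w\|_a$ bound gives $\|w\|_V^2 = \|\tilde\kappa^{-1/2}\nabla\cdot w\|_{L^2}^2 + \|w\|_a^2 \le (1+\Lambda^{-1})\|p_{aux}\|_s^2$, i.e. (\ref{eq:glo-infsup}) holds with $C_{glo} = (1+\Lambda^{-1})^{1/2}$ (or simply a constant, since $\Lambda^{-1}$ is bounded — typically $\Lambda \ge 1$ is not assumed, but one keeps the $\Lambda$‑dependence explicit).

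The main obstacle, and the step deserving the most care, is the second bullet: bounding $\|\pi p\|_s$ (equivalently the pressure companion of $w$) by $\Lambda^{-1/2}\|w\|_a$. This is where the spectral problem (\ref{eq:spectral}) and the eigenvalue $\Lambda = \min_i \lambda_{J_i+1}^{(i)}$ enter, and one must be careful that $\pi p$ restricted to $K_i$ lies in $Q_{aux}(K_i)$ (it does, by definition of $\pi$), that the dual test functions $\phi_j^{(i)}$ are normalized consistently with $s_i(p_j^{(i)},p_j^{(i)})=1$, and that the local test functions glue to a global element of $V_0$ (they vanish on $\partial K_i$, so $\sum_i \mu_i \in V_0(\Omega)$). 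I would also double‑check the sign convention between (\ref{eq:mixed}) and (\ref{eq:aux-mixed}) versus (\ref{eq:glo1})--(\ref{eq:glo2}) — the excerpt writes $+b$ in some systems and $-b$ in others — but this only flips a sign and does not affect the norm estimates.
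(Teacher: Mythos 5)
Your overall strategy is the same as the paper's: you take the same test function $w$ (the $V_{glo}$ solution of (\ref{eq:aux-mixed}) for the datum $p_{aux}$), obtain the same identity $b(w,p_{aux})=\|p_{aux}\|_s^2$, and bound the divergence part of $\|w\|_V$ correctly. However, the step you yourself flag as the main obstacle --- the bound $\|\pi p\|_s\leq \Lambda^{-1/2}\|w\|_a$ --- contains a genuine error. The construction $\mu_i=\sum_j(\lambda_j^{(i)})^{-1}c_j^{(i)}\phi_j^{(i)}$ from the snapshot lemma gives $\|\mu_i\|_a^2=\sum_j(\lambda_j^{(i)})^{-1}(c_j^{(i)})^2$, and the factor $(\lambda_j^{(i)})^{-1}\leq\Lambda^{-1}$ is available only for the indices $j>J_i$, i.e.\ only for functions in the range of $I-\pi$. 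You apply it instead to $\gamma=\pi p|_{K_i}\in Q_{aux}(K_i)=\mathrm{span}\{p_j^{(i)}:j\leq J_i\}$, where the relevant eigenvalues are the \emph{small} ones: $\lambda_1^{(i)}=0$ (so the dual flux does not even exist for the constant mode, consistent with $b(v,1)=0$ for $v\in V_0(K_i)$), and $\lambda_2^{(i)},\dots,\lambda_{J_i}^{(i)}$ are the contrast-degenerate eigenvalues that motivated putting these modes into $Q_{aux}$ in the first place. So the best constant your construction yields is $(\min_{2\leq j\leq J_i}\lambda_j^{(i)})^{-1}$, which blows up with the contrast; it is not $\Lambda^{-1}$.

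This is not a cosmetic issue: your conclusion $C_{glo}=(1+\Lambda^{-1})^{1/2}$ would make the inf-sup constant contrast-independent, whereas the paper's proof deliberately accepts a contrast-dependent constant. The paper closes the estimate differently: from $\|w\|_a^2=s(p_{aux},p)\leq(\max_{x\in\Omega}\tilde\kappa(x))\,\|p_{aux}\|_{L^2(\Omega)}\|p\|_{L^2(\Omega)}$ it invokes the \emph{continuous} inf-sup condition (\ref{eq:cont-infsup}) (in the unweighted $L^2$/$H(\mathrm{div})$ norms, where it is contrast-independent) to get $\|p\|_{L^2(\Omega)}\leq C_0\|w\|_a$, yielding $C_{glo}=O(\max_{x\in\Omega}\tilde\kappa(x))$. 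That contrast dependence is then the reason the oversampling width must scale like the logarithm of the contrast, as remarked immediately after the lemma. To repair your argument, replace your second bullet by this $L^2$-based estimate (or find some other genuinely new argument); as written, the $\Lambda^{-1/2}$ bound on $\|\pi p\|_s$ is false.
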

\begin{proof}
Let $p_{aux}\in Q_{aux}$ with $s(p_{aux},1)=0$. Similar to (\ref{eq:aux-mixed}),
we consider the following problem
\begin{equation}
\label{eq:glo-mixed}
\begin{split}
a(w,v) - b(v,p) &= 0, \quad\forall v\in V_0, \\
b(w,q) &= s(p_{aux},q), \quad\forall q\in Q,
\end{split}
\end{equation}
where the solution $w\in V_{glo}$.
Taking $q= p_{aux}$ in the second equation of (\ref{eq:glo-mixed}), we have
$b(w,p_{aux}) = \|p_{aux}\|_s^2$.
Next, taking $v=w$ in the first equation of (\ref{eq:glo-mixed}) and $q=p$ in the second equation of (\ref{eq:glo-mixed}),
we have
\begin{equation*}
\|w\|_a^2 = b(w,p) = s(p_{aux},p) \leq (\max_{x\in\Omega} \tilde{\kappa}(x)) \, \| p_{aux}\|_{L^2(\Omega)} \, \|p\|_{L^2(\Omega)}.
\end{equation*}
Using the inf-sup condition (\ref{eq:cont-infsup}), we have $\|p\|_{L^2(\Omega)} \leq C_0 \|w\|_a$.
This completes the proof.
\end{proof}

From the above inf-sup condition, we obtain the existence of solution of the system (\ref{eq:glo1})-(\ref{eq:glo2}).
We remark that, the constant $C_{glo} = O(\max_{x\in\Omega} \tilde{\kappa}(x))$, which depends
on the contrast of the coefficient $\kappa$. We will see later that this fact leads to
the conclusion that the oversampling width is the logarithm of the constrast.

Next, we will prove the following property.
\begin{lemma}
\label{lem:snap-bound}
Let $(u_{glo},p_{glo})$ be the solution of (\ref{eq:glo1})-(\ref{eq:glo2})
and let $(u_{snap},p_{snap})$ be the snapshot solution of (\ref{eq:snap1})-(\ref{eq:snap2}). Then
we have $u_{glo}=u_{snap}$, $p_{glo} = \pi p_{snap}$ and
\begin{equation}
\| p_{snap} - p_{glo}\|_s \leq \Lambda^{-\frac{1}{2}}\, \Big( \Lambda^{-\frac{1}{2}} \| (I-\pi) (\tilde{\kappa}^{-1}f)\|_s
+ C_0 \|f\|_{L^2(\Omega)} \Big).
\end{equation}
\end{lemma}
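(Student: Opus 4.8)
The plan is to split the statement into the two identities $u_{glo}=u_{snap}$, $p_{glo}=\pi p_{snap}$, and then the norm estimate, which is a spectral-gap argument in the spirit of the previous lemma.

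For the identities, the key structural observation is that $V_{glo}$ is $b$-orthogonal to $(I-\pi)Q$: for every $v\in V_{glo}$ and $q\in Q$ one has $b(v,(I-\pi)q)=0$. This follows by substituting $(I-\pi)q$ for $q$ in the second equation of the global construction (\ref{eq:global1})--(\ref{eq:global2}): the term $s(\pi q_j^{(i)},\pi(I-\pi)q)$ vanishes because $\pi(I-\pi)=0$, and $s(p_j^{(i)},(I-\pi)q)=0$ because $p_j^{(i)}\in Q_{aux}$ and $\pi$ is $s$-self-adjoint, leaving only $b(\psi_j^{(i)},(I-\pi)q)=0$. Next, since $\pi(\tilde{\kappa}^{-1}f)\in Q_{aux}$ and $s(\pi(\tilde{\kappa}^{-1}f),1)=(f,\pi 1)=(f,1)=0$ (as $Q_{aux}$ contains constants and $\int_\Omega f=0$), Lemma~\ref{lem:Vglo} applied with $p_{aux}=\pi(\tilde{\kappa}^{-1}f)$ identifies the velocity part of the snapshot solution (\ref{eq:snap1})--(\ref{eq:snap2}): by uniqueness of the mixed system, $u_{snap}\in V_{glo}$ and $\int_\Omega p_{snap}=0$. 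I then check that $(u_{snap},\pi p_{snap})$ satisfies (\ref{eq:glo1})--(\ref{eq:glo2}): the divergence equation holds since $b(u_{snap},q)=s(\pi(\tilde{\kappa}^{-1}f),q)=(f,q)$ for $q\in Q_{aux}$ (using $\pi q=q$), and the first equation holds since, for $v\in V_{glo}$, $a(u_{snap},v)+b(v,\pi p_{snap})=a(u_{snap},v)+b(v,p_{snap})-b(v,(I-\pi)p_{snap})=0$ by (\ref{eq:snap1}) and the orthogonality just established. The inf-sup condition (\ref{eq:glo-infsup}) gives uniqueness of the solution of (\ref{eq:glo1})--(\ref{eq:glo2}) (the pressure being fixed by its normalization), hence $u_{glo}=u_{snap}$ and $p_{glo}=\pi p_{snap}$.

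For the estimate, write $p_{snap}-p_{glo}=(I-\pi)p_{snap}$ and use the triangle inequality $\|(I-\pi)p_{snap}\|_s\le\|(I-\pi)(p-p_{snap})\|_s+\|(I-\pi)p\|_s$, where $(u,p)$ is the exact solution of (\ref{eq:mixed}). The first term is controlled by the previous lemma: combining (\ref{eq:snapbound}) with (\ref{eq:sbound1}) gives $\|(I-\pi)(p-p_{snap})\|_s\le\Lambda^{-1/2}\|u-u_{snap}\|_a\le\Lambda^{-1}\|(I-\pi)(\tilde{\kappa}^{-1}f)\|_s$. For the second term I reuse the spectral-gap mechanism on the exact solution itself: on each $K_i$ write $(I-\pi)p|_{K_i}=\sum_{j>J_i}c_j^{(i)}p_j^{(i)}$, set $\mu_i=\sum_{j>J_i}(\lambda_j^{(i)})^{-1}c_j^{(i)}\phi_j^{(i)}\in V_0(K_i)$, and test the first equation of (\ref{eq:mixed}) with $v=\mu_i$. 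Using (\ref{eq:spectral}) and the $s_i$-orthonormality of the eigenfunctions gives $b(\mu_i,p)=\|(I-\pi)p\|_{s(K_i)}^2$ and $\|\mu_i\|_{a(K_i)}^2\le\Lambda^{-1}\|(I-\pi)p\|_{s(K_i)}^2$, hence $\|(I-\pi)p\|_{s(K_i)}\le\Lambda^{-1/2}\|u\|_{a(K_i)}$, and summing over $i$ yields $\|(I-\pi)p\|_s\le\Lambda^{-1/2}\|u\|_a$. Finally, testing (\ref{eq:mixed}) with $v=u$, $q=p$ and invoking (\ref{eq:cont-infsup}) gives the standard a priori bound $\|u\|_a\le C_0\|f\|_{L^2(\Omega)}$. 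Adding the two contributions and factoring $\Lambda^{-1/2}$ gives the claimed inequality.

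The main obstacle I anticipate is the bookkeeping behind $p_{glo}=\pi p_{snap}$: one must be sure the global space only interacts with the $\pi$-part of the pressure — this is exactly the orthogonality $b(v,(I-\pi)q)=0$ — and that the constant modes are normalized consistently so that the pressure identity holds exactly rather than only up to a constant. Once that is settled, the norm estimate is essentially a second application of the spectral argument already used for the snapshot lemma, applied here once to $p-p_{snap}$ and once to $p$.
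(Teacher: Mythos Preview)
Your proof is correct and follows essentially the same route as the paper: the identities rest on $u_{snap}\in V_{glo}$ via Lemma~\ref{lem:Vglo} together with the orthogonality $b(v,(I-\pi)q)=0$ for $v\in V_{glo}$ (the paper phrases this as $\operatorname{div}V_{glo}\subset\tilde\kappa\,Q_{aux}$), and the estimate is the spectral-gap argument of the preceding lemma. The only cosmetic difference is in the ordering of the estimate: the paper applies the spectral bound directly to $p_{snap}$, obtaining $\|(I-\pi)p_{snap}\|_s\le\Lambda^{-1/2}\|u_{snap}\|_a$, and then splits $\|u_{snap}\|_a\le\|u-u_{snap}\|_a+\|u\|_a$, whereas you first split $(I-\pi)p_{snap}$ through the exact pressure $p$ and apply the spectral bound twice --- both arrangements yield the stated inequality.
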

\begin{proof}

Note that, by Lemma \ref{lem:Vglo}, we see that $u_{snap} \in V_{glo} \subset V_{0}$.
We note also that (\ref{eq:glo1})-(\ref{eq:glo2}) is a conforming approximtion to the system
 (\ref{eq:snap1})-(\ref{eq:snap2}), and that $\text{div}\, V_{glo} \subset \tilde{\kappa} Q_{aux}$.
 Thus, we conclude that $u_{snap}=u_{glo}$.
Next, using (\ref{eq:glo1}) and (\ref{eq:snap1}), we have
\begin{equation*}
b(v,p_{snap} - p_{glo}) = 0, \quad \forall v\in V_{glo}.
\end{equation*}
Since $\text{div}\, V_{glo} \subset \tilde{\kappa} Q_{aux}$, we conclude that $p_{glo} = \pi p_{snap}$.
Finally,
\begin{equation*}
\|p_{glo}-p_{snap}\|_{s}=\|(I-\pi)p_{snap}\|_{s}\leq\cfrac{1}{\Lambda^{\frac{1}{2}}}\|u_{snap}\|_{a}
\end{equation*}
where the last inequality follows from a proof similar to that of (\ref{eq:snapbound}).
Finally, we note that
\begin{equation*}
\| u_{snap}\|_a \leq \| u-u_{snap}\|_a + \|u\|_a
\end{equation*}
where the first term on the right can be estiamted using (\ref{eq:sbound1})
and the second term on the right can be estimated as $\|u\|_a \leq C_0 \|f\|_{L^2(\Omega)}$.

\end{proof}

In next section, we will prove the existence of the global basis functions.

\subsection{Well-posedness of global and multiscale basis functions}\label{sec:a2}

In this section, we consider the well-posedness of finding the global basis functions (defined in (\ref{eq:global1})-(\ref{eq:global2}))
and the
multiscale basis functions (defined in (\ref{eq:multiscale1})-(\ref{eq:multiscale2})). Let $S$ be a region which is a union of connected coarse elements.
We will take $S=K_i^+$ for multiscale basis and take $S=\Omega$ for global basis functions.
Consider the problem of finding
 $\psi_{j}^{(i)} \in V_0(S)$ and $q_{j}^{(i)} \in Q(S)$ such that
\begin{align}
a(\psi_{j}^{(i)},v)-b(v,q_{j}^{(i)}) & =0, \quad\;\forall v\in V_0(S), \label{eq:inf1} \\
s(\pi q_{j}^{(i)},\pi q)+b(\psi_{j}^{(i)},q) & =s(p_{j}^{(i)},q), \quad\;\forall q\in Q(S). \label{eq:inf2}
\end{align}
We will show that the above problem has a solution.
To do so, we prove the following inf-sup condition.

\begin{lemma}
For every $q \in Q(S)$, there is $v\in V_0(S)$ such that
\begin{equation}
\label{eq:infsup}
\|q\|_s^2 \leq (1+\frac{1}{\Lambda}) \frac{|b(v,q)|^2}{\|q\|^2_V} + \|\pi q\|_{s}^2.
\end{equation}
\end{lemma}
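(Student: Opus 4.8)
The plan is to establish the inf-sup condition \eqref{eq:infsup} by splitting an arbitrary $q \in Q(S)$ into its projection $\pi q$ onto the auxiliary space and the orthogonal remainder $(I-\pi)q$, and controlling the remainder by constructing a suitable test velocity. The term $\|\pi q\|_s^2$ already appears on the right-hand side, so all the work goes into bounding $\|(I-\pi)q\|_s^2$.

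First I would decompose $q = \pi q + (I-\pi)q$ and observe that $\|q\|_s^2 = \|\pi q\|_s^2 + \|(I-\pi)q\|_s^2$ because $\pi$ is the $s$-orthogonal projection. It therefore remains to bound $\|(I-\pi)q\|_s^2$ by $(1+\Lambda^{-1})|b(v,q)|^2/\|v\|_V^2$ for an appropriate $v$. Following the mechanism already used in the snapshot-error lemma, on each coarse element $K_k \subset S$ I would write the restriction of $(I-\pi)q$ as $w_k = \sum_{j>J_k} c_j^{(k)} p_j^{(k)}$ and set $v := \sum_k v_k$ with $v_k = \sum_{j>J_k} (\lambda_j^{(k)})^{-1} c_j^{(k)} \phi_j^{(k)} \in V_0(K_k) \subset V_0(S)$. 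Using the spectral problem \eqref{eq:spectral}, the second equation gives $b(v_k, q) = b(v_k, (I-\pi)q) = \sum_{j>J_k} (c_j^{(k)})^2$, so summing over $k$ yields $b(v,q) = \|(I-\pi)q\|_s^2$. For the denominator, $\|v\|_V^2 = \sum_k \|v_k\|_V^2$, and for each $k$ both the $a$-part and the $\tilde\kappa^{-1}|\nabla\cdot v_k|^2$-part are estimated by the spectral problem: $\|v_k\|_a^2 \le \Lambda^{-1}\sum_{j>J_k}(c_j^{(k)})^2$ and $\|\tilde\kappa^{-1/2}\nabla\cdot v_k\|_{L^2(K_k)}^2 = \sum_{j>J_k}(c_j^{(k)})^2$ (since $b(\phi_j^{(k)},\cdot) = \lambda_j^{(k)} s_k(p_j^{(k)},\cdot)$ forces $\nabla\cdot\phi_j^{(k)} = \lambda_j^{(k)}\tilde\kappa p_j^{(k)}$). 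Adding these gives $\|v\|_V^2 \le (1+\Lambda^{-1})\|(I-\pi)q\|_s^2$.

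Combining, $|b(v,q)|^2/\|v\|_V^2 = \|(I-\pi)q\|_s^4 / \|v\|_V^2 \ge \|(I-\pi)q\|_s^2 / (1+\Lambda^{-1})$, hence $(1+\Lambda^{-1})|b(v,q)|^2/\|v\|_V^2 \ge \|(I-\pi)q\|_s^2$, and adding $\|\pi q\|_s^2$ to both sides recovers \eqref{eq:infsup}. I expect the main obstacle to be the bookkeeping around the divergence term: one must verify carefully that $\nabla\cdot\phi_j^{(k)} = \lambda_j^{(k)}\tilde\kappa\, p_j^{(k)}$ follows from \eqref{eq:spectral} (i.e. that the second equation of the spectral problem identifies the divergence), and that summing the localized test functions over the coarse elements of $S$ does not introduce cross terms in $\|v\|_V^2$ — this holds precisely because each $v_k$ is supported in $K_k$ with zero normal trace on $\partial K_k$, so the pieces are $V(S)$-orthogonal. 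A minor point is that $b(v,q)$ can be taken nonnegative (if $(I-\pi)q = 0$ the inequality is trivial with $v=0$), so the absolute value in \eqref{eq:infsup} causes no trouble.
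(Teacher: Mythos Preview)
Your proposal is correct and follows essentially the same argument as the paper: both construct $v$ element-by-element as $v_k=\sum_{j>J_k}(\lambda_j^{(k)})^{-1}c_j^{(k)}\phi_j^{(k)}$, use the spectral problem to compute $b(v,q)=\|(I-\pi)q\|_s^2$ and $\|v\|_V^2\le(1+\Lambda^{-1})\|(I-\pi)q\|_s^2$, and then combine with the $s$-orthogonal decomposition $\|q\|_s^2=\|\pi q\|_s^2+\|(I-\pi)q\|_s^2$. Your additional remarks on the divergence identity $\nabla\cdot\phi_j^{(k)}=\lambda_j^{(k)}\tilde\kappa\,p_j^{(k)}$, the absence of cross terms (since each $v_k\in V_0(K_k)$), and the degenerate case $(I-\pi)q=0$ are all accurate and make the argument slightly more explicit than the paper's version.
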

\begin{proof}
Let $q\in Q(S)$. Recall that $S$ is a union of coarse elements.
Consider a coarse element $K_i \subset S$ and $q^{(i)} = q|_{K_i}$.
Note that, we can write $q^{(i)} = \sum_j c^i_j p_j^{(i)}$, since the set of eigenfunctions $\{ p^{(i)}_j\}$
forms a basis for the space $Q(K_i)$. Using $\{\phi_j^{(i)}\}$ from (\ref{eq:spectral}), we define
\begin{equation}
\label{eq:vi}
v^{(i)} = \sum_{j > J_i} \frac{c_j^i}{\lambda_j^{(i)}} \phi_j^{(i)}.
\end{equation}
and define $v = \sum v^{(i)}$, where the sum is taken over all $K_i \subset S$.
Next, we will show the required property (\ref{eq:infsup}).
Note that, by the orthogonality of eigenfunctions,
\begin{equation*}
\|v\|_a^2 = \sum_{K_i\in S} \sum_{j>J_i} \Big( \frac{c_j^i}{\lambda_j^{(i)}} \Big)^2 \| \phi_j^{(i)}\|_a
=  \sum_{K_i\in S} \sum_{j>J_i} \Big( \frac{c_j^i}{\lambda_j^{(i)}} \Big)^2 \lambda_j^{(i)} \| p_j^{(i)}\|_s
\leq \frac{1}{\Lambda} \| (I-\pi) q \|_s^2.
\end{equation*}
On the other hand, by (\ref{eq:spectral}), we have
\begin{equation*}
\nabla\cdot v^{(i)} = \sum_{j>J_i} c_j^i \, \tilde{\kappa} \, p_j^{(i)}.
\end{equation*}
So, we have
\begin{equation*}
\| \tilde{\kappa}^{-\frac{1}{2}} \nabla\cdot v\|_{L^2(\Omega)}^2
= \sum_{K_i\subset S} \| \tilde{\kappa}^{-\frac{1}{2}} \nabla\cdot v^{(i)} \|_{L^2(\Omega)}^2
= \| (I-\pi) q\|_s^2.
\end{equation*}
Thus, by the above and the definition of the norm $\|\cdot\|_V$, we have
\begin{equation*}
\|v\|_V^2 \leq (1+\frac{1}{\Lambda}) \|(I-\pi)q\|_s^2.
\end{equation*}
Next, by (\ref{eq:vi}), we have
\begin{equation*}
b(v,q) = \sum_{K_i\subset S} \int_{K_i} \sum_{j>J_i} c_j^i \tilde{\kappa} p_j^{(i)} q^{(i)}
= \| (I-\pi)q\|_s^2
\end{equation*}
which implies
\begin{equation*}
\frac{|b(v,q)|^2}{\|v\|_V^2} \geq \Big( 1+\frac{1}{\Lambda} \Big)^{-1} \| (I-\pi)q\|_s^2.
\end{equation*}
Finally, we have
\begin{equation*}
\|q\|_s^2 = \|(I-\pi)q\|_s^2 + \|\pi q\|_s^2
\leq  \Big( 1+\frac{1}{\Lambda} \Big) \frac{|b(v,q)|^2}{\|v\|_V^2} + \| \pi q\|_s^2.
\end{equation*}
This completes the proof.

\end{proof}

We remark that the above lemma implies
 the existence of solution of (\ref{eq:inf1})-(\ref{eq:inf2}),
 see Appendix \ref{app}.

Finally, we give an estimate of $\|\psi_j^{(i)} -\psi_{j,ms}^{(i)}\|_V$
and $\|q_j^{(i)} - q_{j,ms}^{(i)}\|_s$.

\begin{lemma}
\label{lem:cea}
Let $(\psi_j^{(i)},q_j^{(i)})$ be the solution of (\ref{eq:global1})-(\ref{eq:global2})
and let $(\psi_{j,ms}^{(i)},q_{j,ms}^{(i)})$ be the solution of (\ref{eq:multiscale1})-(\ref{eq:multiscale2}).
Then we have the following approximation property
\begin{align}
\|\psi_j^{(i)}-\psi_{j,ms}^{(i)}\|^2_{V}+\|q_j^{(i)}-q_{j,ms}^{(i)}\|^2_{s}
&\leq C(1+\frac{1}{\Lambda})\Big( \|\psi_j^{(i)}-v\|^2_{V}+\|q_j^{(i)}-q\|^2_{s}\Big),
\end{align}
for all $(v,q)\in V_0(K_i^+)\times Q(K_i^+)$.
\end{lemma}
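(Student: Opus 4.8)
The plan is to recognize that the multiscale basis function $(\psi_{j,ms}^{(i)}, q_{j,ms}^{(i)})$ is the Galerkin projection of the global basis function $(\psi_j^{(i)}, q_j^{(i)})$ onto the subspaces $V_0(K_i^+) \times Q(K_i^+)$ for the saddle-point system (\ref{eq:inf1})--(\ref{eq:inf2}) with right-hand side $s(p_j^{(i)},\cdot)$. Indeed, both $(\psi_j^{(i)},q_j^{(i)})$ restricted to $K_i^+$ and $(\psi_{j,ms}^{(i)},q_{j,ms}^{(i)})$ solve the same bilinear system, tested against $V_0(K_i^+) \times Q(K_i^+)$ in the latter case and against the larger $V_0 \times Q$ in the former. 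So this is a standard quasi-optimality (Céa-type) estimate for mixed finite element methods, and the two ingredients one needs are (i) continuity of the bilinear forms $a(\cdot,\cdot)$, $b(\cdot,\cdot)$ and $s(\pi\cdot,\pi\cdot)$ in the relevant norms, which is elementary, and (ii) an inf-sup / coercivity structure on the discrete space, which is precisely the content of the inf-sup Lemma with constant involving $(1+\frac{1}{\Lambda})$ established just above.

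The key steps, in order, would be: First, I would write down the error equations. Subtracting (\ref{eq:multiscale1})--(\ref{eq:multiscale2}) from (\ref{eq:global1})--(\ref{eq:global2}) restricted to test functions in $V_0(K_i^+) \times Q(K_i^+)$, set $e_\psi = \psi_j^{(i)} - \psi_{j,ms}^{(i)}$ and $e_q = q_j^{(i)} - q_{j,ms}^{(i)}$, and obtain the Galerkin orthogonality: for all $v \in V_0(K_i^+)$, $a(e_\psi, v) - b(v, e_q) = 0$, and for all $q \in Q(K_i^+)$, $s(\pi e_q, \pi q) + b(e_\psi, q) = 0$. Second, for an arbitrary $(v,q) \in V_0(K_i^+) \times Q(K_i^+)$, I would split $e_\psi = (\psi_j^{(i)} - v) + (v - \psi_{j,ms}^{(i)})$ and $e_q = (q_j^{(i)} - q) + (q - q_{j,ms}^{(i)})$, denoting the second (discrete) pieces $\xi = v - \psi_{j,ms}^{(i)} \in V_0(K_i^+)$ and $\eta = q - q_{j,ms}^{(i)} \in Q(K_i^+)$. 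Third, I would bound $\|\xi\|_V$ and $\|\eta\|_s$ in terms of the approximation errors $\|\psi_j^{(i)}-v\|_V$ and $\|q_j^{(i)}-q\|_s$: to control $\|\xi\|_a$ I test the first orthogonality relation with $v = \xi$ and the second with $q = \eta$, add them, and use that $s(\pi e_q, \pi \eta) + a(e_\psi,\xi) = b(e_\psi,\eta) - b(\xi, e_q) + \dots$ rearranges so that the discrete-discrete cross terms cancel, leaving $\|\xi\|_a^2 + \|\pi \eta\|_s^2$ controlled by continuity against the interpolation errors. To then upgrade $\|\xi\|_a$ to $\|\xi\|_V$ and to control the full $\|\eta\|_s$ (not just $\|\pi\eta\|_s$), I would invoke the inf-sup Lemma (\ref{eq:infsup}) applied on $S = K_i^+$ to $\eta \in Q(K_i^+)$: it gives $\|\eta\|_s^2 \le (1+\frac1\Lambda) |b(w,\eta)|^2 / \|w\|_V^2 + \|\pi\eta\|_s^2$ for a suitable $w$, and $b(w,\eta)$ is handled via the first orthogonality relation as $b(w,\eta) = a(e_\psi, w) + b(w, q_j^{(i)}-q) - b(w,\eta)$... more cleanly, $b(w, e_q) = a(e_\psi, w)$ so $b(w,\eta) = a(e_\psi,w) - b(w, q_j^{(i)}-q)$, both bounded by $\|w\|_V$ times the errors. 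Finally, I would assemble: triangle inequality $\|e_\psi\|_V \le \|\psi_j^{(i)}-v\|_V + \|\xi\|_V$ and $\|e_q\|_s \le \|q_j^{(i)}-q\|_s + \|\eta\|_s$, square and collect the $(1+\frac1\Lambda)$ factors into the stated constant $C(1+\frac1\Lambda)$.

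The main obstacle I anticipate is bookkeeping the $\pi$-projection carefully: the bilinear form on the pressure side is the degenerate $s(\pi\cdot,\pi\cdot)$ rather than $s(\cdot,\cdot)$, so coercivity is only available on $\pi Q(K_i^+)$, and recovering control of the full $s$-norm of $\eta$ genuinely requires the inf-sup bound from the preceding Lemma — this is where the factor $(1+\frac1\Lambda)$ enters and must be tracked. A secondary subtlety is making sure the Galerkin orthogonality is legitimate, i.e., that $\psi_j^{(i)}|_{K_i^+}$ together with $q_j^{(i)}|_{K_i^+}$ actually satisfies the local equations when tested only against $V_0(K_i^+) \subset V_0$ and $Q(K_i^+) \subset Q$; since functions in $V_0(K_i^+)$ extend by zero to elements of $V_0$ and $\pi$ acts elementwise (hence commutes with restriction to the union of coarse blocks $K_i^+$), this is fine, but it should be stated explicitly. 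Everything else is routine continuity estimates with constants independent of the contrast $B$.
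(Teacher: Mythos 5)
Your overall strategy coincides with the paper's: write the error equations, use the Galerkin--orthogonality identity in which the discrete--discrete cross terms $b(\xi,\eta)$ cancel (the paper centers this identity at the errors $e_\psi=\psi_j^{(i)}-\psi_{j,ms}^{(i)}$, $e_q=q_j^{(i)}-q_{j,ms}^{(i)}$ rather than at $\xi$, $\eta$, but the two are algebraically equivalent), invoke the inf-sup estimate (\ref{eq:infsup}) on $S=K_i^+$ to recover the full $s$-norm of the pressure error from its projection $\pi(\cdot)$, and finish with Young's inequality. Your treatment of $\|\eta\|_s$ via $b(w,e_q)=a(e_\psi,w)$ is exactly what the paper does, and your remark that the orthogonality is legitimate because local functions extend by zero and $\pi$ acts elementwise matches the paper's observation.

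There is, however, one concrete gap: you claim the inf-sup lemma also ``upgrades $\|\xi\|_a$ to $\|\xi\|_V$,'' but that lemma only controls pressures; it says nothing about $\|\tilde\kappa^{-1/2}\nabla\cdot(\psi_j^{(i)}-\psi_{j,ms}^{(i)})\|_{L^2(\Omega)}$, which you need twice: once because the left-hand side of the lemma carries the full $\|\cdot\|_V$-norm, and once to bound the term $b(\xi,q_j^{(i)}-q)\le\|\tilde\kappa^{-1/2}\nabla\cdot\xi\|_{L^2(\Omega)}\,\|q_j^{(i)}-q\|_s$ in your coercive identity, which is not absorbed by $\|\xi\|_a^2+\|\pi\eta\|_s^2$. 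The paper closes this by observing that the second error equation holds for every test function $z\in Q$ (not only $z\in Q(K_i^+)$, precisely because $\psi_{j,ms}^{(i)}$ and $q_{j,ms}^{(i)}$ vanish outside $K_i^+$) and choosing $z=\tilde\kappa^{-1}\nabla\cdot(\psi_j^{(i)}-\psi_{j,ms}^{(i)})$, which yields
\[
\|\tilde\kappa^{-\frac{1}{2}}\nabla\cdot(\psi_j^{(i)}-\psi_{j,ms}^{(i)})\|_{L^2(\Omega)}\le\|\pi(q_j^{(i)}-q_{j,ms}^{(i)})\|_s,
\]
a quantity already controlled by the coercive identity. With this one additional ingredient your outline becomes a complete proof; without it, neither the $V$-norm on the left nor the divergence term on the right is controlled.
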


\begin{proof}
Using  (\ref{eq:global1})-(\ref{eq:global2}) and (\ref{eq:multiscale1})-(\ref{eq:multiscale2}),
we have
\begin{align}
a(\psi_j^{(i)}-\psi_{j,ms}^{(i)},v)-b(v,q_j^{(i)}-q_{j,ms}^{(i)}) & =0, \quad\;\forall v\in V_0(K_i^+), \label{eq:error1} \\
s(\pi (q_j^{(i)}-q_{j,ms}^{(i)}),\pi q)+b(\psi_j^{(i)}-\psi_{j,ms}^{(i)},q) & =0, \quad\;\forall q\in Q(K_i^+).\label{eq:error2}
\end{align}
Then, for all $(v,q)\in V_0(K_i^+)\times Q(K_i^+)$, using (\ref{eq:error1})-(\ref{eq:error2}), we have
\begin{equation}
\label{eq:error3}
\begin{split}
&\, a(\psi_j^{(i)}-\psi_{j,ms}^{(i)},\psi_j^{(i)}-\psi_{j,ms}^{(i)})
+ s(\pi (q_j^{(i)}-q_{j,ms}^{(i)}),\pi (q_j^{(i)}-q_{j,ms}^{(i)})) \\
=&\, a(\psi_j^{(i)}-\psi_{j,ms}^{(i)},\psi_j^{(i)}-v)
+ s(\pi (q_j^{(i)}-q_{j,ms}^{(i)}),\pi (q_j^{(i)}-q)) \\
&\, + b(v-\psi_{j,ms}^{(i)},q_j^{(i)}-q_{j,ms}^{(i)})
- b(\psi_j^{(i)}-\psi_{j,ms}^{(i)},q-q_{j,ms}^{(i)}).
\end{split}
\end{equation}
Note that the first two terms on the right hand side of (\ref{eq:error3}) can be estimated easily.
For the other two terms on the right hand side of (\ref{eq:error3}), we observe that
\begin{equation}
\label{eq:error4}
\begin{split}
&\, b(v-\psi_{j,ms}^{(i)},q_j^{(i)}-q_{j,ms}^{(i)})
- b(\psi_j^{(i)}-\psi_{j,ms}^{(i)},q-q_{j,ms}^{(i)}) \\
= &\, b(v-\psi_{j}^{(i)},q_j^{(i)}-q_{j,ms}^{(i)}) + b(\psi_j^{(i)}-\psi_{j,ms}^{(i)},q_j^{(i)}-q_{j,ms}^{(i)}) \\
&\, - b(\psi_j^{(i)}-\psi_{j,ms}^{(i)},q-q_{j}^{(i)}) - b(\psi_j^{(i)}-\psi_{j,ms}^{(i)},q_j^{(i)}-q_{j,ms}^{(i)}) \\
= &\, b(v-\psi_{j}^{(i)},q_j^{(i)}-q_{j,ms}^{(i)}) - b(\psi_j^{(i)}-\psi_{j,ms}^{(i)},q-q_{j}^{(i)}).
\end{split}
\end{equation}
We will next estimate the two terms on the right hand side of (\ref{eq:error4}).
\begin{itemize}
\item
For the first term on the right hand side of (\ref{eq:error4}), we apply the Cauchy-Schwarz inequality
and the triangle inequality,
\begin{equation}
\label{eq:error5}
b(v-\psi_{j}^{(i)},q_j^{(i)}-q_{j,ms}^{(i)}) \leq \| v-\psi_j^{(i)}\|_V \,
\Big( \| q_j^{(i)}-q \|_s + \| q - q_{j,ms}^{(i)}\|_s \Big).
\end{equation}
Note that $q-q_{j,ms}^{(i)} \in Q(K_i^+)$.
By the inf-sup condition (\ref{eq:infsup}), there is $w\in V_0(K_i^+)$ such that
\begin{equation*}
\| q-q_{j,ms}^{(i)} \|^2_s \leq (1+\frac{1}{\Lambda}) \frac{|b(w, q-q_{j,ms}^{(i)})|^2}{\|w\|_V^2}
+ \| \pi (q-q_{j,ms}^{(i)})\|_s^2.
\end{equation*}
Note that $ \| \pi (q-q_{j,ms}^{(i)})\|_s
\leq \| q-q_{j}^{(i)}\|_s + \| \pi(q_j^{(i)}-q_{j,ms}^{(i)})\|_s$. In addition, by (\ref{eq:error1}), we have
\begin{equation*}
b(w, q-q_{j,ms}^{(i)})
= b(w, q_j^{(i)}-q_{j,ms}^{(i)}) + b(w, q-q_{j}^{(i)})
= a(\psi_j^{(i)} - \psi_{j,ms}^{(i)}, w) + b(w, q-q_{j}^{(i)}).
\end{equation*}
Using the above, we see that (\ref{eq:error5}) becomes
\begin{equation*}
b(v-\psi_{j}^{(i)},q_j^{(i)}-q_{j,ms}^{(i)}) \leq C \| v-\psi_j^{(i)}\|_V \,
\Big( \| q_j^{(i)}-q \|_s + \| \pi( q_j^{(i)} - q_{j,ms}^{(i)})\|_s + \| \psi_j^{(i)} - \psi_{j,ms}^{(i)} \|_V \Big).
\end{equation*}
This gives the required estimate for the first term on the right hand side of (\ref{eq:error4}).

\item
For the second term on the right hand side of (\ref{eq:error4}), using the
Cauchy-Schwarz inequality, we have
\begin{equation*}
b(\psi_j^{(i)}-\psi_{j,ms}^{(i)},q-q_{j}^{(i)})
\leq \| \tilde{\kappa}^{-\frac{1}{2}} \nabla\cdot (\psi_j^{(i)}-\psi_{j,ms}^{(i)}) \|_{L^2(\Omega)}
\, \| q - q_j^{(i)} \|_s.
\end{equation*}
We note that (\ref{eq:error2}) also holds for any test function $r\in Q$, that is,
\begin{equation*}
s(\pi (q_j^{(i)}-q_{j,ms}^{(i)}),\pi z)+b(\psi_j^{(i)}-\psi_{j,ms}^{(i)},z)  =0, \quad \forall z\in Q
\end{equation*}
since both $q_{j,ms}^{(i)}$ and $\psi_{j,ms}^{(i)}$ are zero outside $K_i^+$.
Taking $z = \tilde{\kappa}^{-1} \nabla\cdot (\psi_j^{(i)}-\psi_{j,ms}^{(i)}) \in Q$,
\begin{equation*}
\begin{split}
&\, \| \tilde{\kappa}^{-\frac{1}{2}} \nabla\cdot (\psi_j^{(i)}-\psi_{j,ms}^{(i)}) \|_{L^2(\Omega)}^2 \\
= &\, -s(\pi (q_j^{(i)}-q_{j,ms}^{(i)}),\pi z) \\
\leq &\, \| \pi (q_j^{(i)}-q_{j,ms}^{(i)}) \|_s \, \| \tilde{\kappa}^{-\frac{1}{2}} \nabla\cdot (\psi_j^{(i)}-\psi_{j,ms}^{(i)}) \|_{L^2(\Omega)}.
\end{split}
\end{equation*}
So, we have
\begin{equation*}
b(\psi_j^{(i)}-\psi_{j,ms}^{(i)},q-q_{j}^{(i)})
\leq  \| \pi (q_j^{(i)}-q_{j,ms}^{(i)}) \|_s
\, \| q - q_j^{(i)} \|_s.
\end{equation*}
This gives the required estimate for the second term on the right hand side of (\ref{eq:error4}).
\end{itemize}

The rest of the proof follows from the Young's inequality.

\end{proof}

\subsection{Decay property of global basis functions}\label{sec:a3}

In this section, we will prove a decay property of the global basis functions $\psi_j^{(i)}$ and $q_j^{(i)}$.
In particular, we will show that the differences $\| \psi_j^{(i)} - \psi_{j,ms}^{(i)}\|_V$
and $\| q_j^{(i)} - q_{j,ms}^{(i)}\|_s$ are small when the oversampled region $K_i^+$ is large enough.

Let $K_i$ be a given coarse element.
We define $K_{i,m}$ as the oversampling coarse neighborhood of enlarging
$K_{i}$ by $m$ coarse grid layer. See Figure~\ref{fig:mesh} for an illustration of $m=2$.
For $M>m$, we define $\chi_{i}^{m,M}\in\text{span}\{\chi_{i}\}$
such that
\begin{align*}
\chi_{i}^{M,m} & =1\text{ in }K_{i,m},\\
\chi_{i}^{M,m} & =0\text{ in }\Omega\backslash K_{i,M}.
\end{align*}
We also assume $\| \nabla \chi_i^{M,m}\|_{L^{\infty}(\Omega)} = O(H^{-1})$.
Note that, we have $|\nabla \chi_i^{M,m}|^2 \leq C \sum_{j=1}^{N_c} |\nabla \chi_j|^2$.
In the following lemma, we will prove the decay property using $K_i^+ = K_{i,l}$,
that is, the basis functions are constructed in a region which is a $l$ coarse grid layer
extension of $K_i$, with $l \geq 2$. (See Figure~\ref{fig:mesh}).

\begin{lemma}
\label{lem:decay}
Let $(\psi_j^{(i)},q_j^{(i)})$ be the solution of (\ref{eq:global1})-(\ref{eq:global2})
and let $(\psi_{j,ms}^{(i)},q_{j,ms}^{(i)})$ be the solution of (\ref{eq:multiscale1})-(\ref{eq:multiscale2}).
For $K_{i}^{+}=K_{i,l}$ with $l\geq2$, we have
\[
\|\psi_{j}^{(i)}-\psi_{j,ms}^{(i)}\|_{V}^{2}+\|q_{j}^{(i)}-q_{j,ms}^{(i)}\|_{s}^{2}\leq E \, \|p_j^{(i)}\|_s^2
\]
where $E=C(1+\cfrac{1}{\Lambda}) (1+C^{-1} (1+\frac{1}{\Lambda})^{-\frac{1}{2}})^{1-l}$.
\end{lemma}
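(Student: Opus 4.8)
The plan is to establish exponential decay of the global basis functions away from $K_i$ by a Caccioppoli-type iteration argument, using the cutoff functions $\chi_i^{M,m}$ to localize. First I would recall from Lemma~\ref{lem:cea} that $\|\psi_j^{(i)}-\psi_{j,ms}^{(i)}\|_V^2 + \|q_j^{(i)}-q_{j,ms}^{(i)}\|_s^2 \leq C(1+\frac{1}{\Lambda})\big(\|\psi_j^{(i)}-v\|_V^2 + \|q_j^{(i)}-q\|_s^2\big)$ for any admissible pair $(v,q)\in V_0(K_i^+)\times Q(K_i^+)$, so it suffices to produce a good test pair supported in $K_i^+ = K_{i,l}$ whose difference from the global solution is controlled by $(1+C^{-1}(1+\frac{1}{\Lambda})^{-1/2})^{1-l}\|p_j^{(i)}\|_s^2$. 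The natural candidate is $v = \psi_j^{(i)} - \text{(correction involving }\chi_i^{l-1,l-2}\psi_j^{(i)}\text{)}$; more precisely, one wants $v$ to agree with $\psi_j^{(i)}$ on the inner region and vanish near $\partial K_{i,l}$, but the cutoff of an $H(\mathrm{div})$ field is not in $V_0$ because $\mathrm{div}(\chi v) = \chi\,\mathrm{div}\,v + \nabla\chi\cdot v$ picks up the gradient term; this must be compensated.

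The key steps, in order: (1) Introduce $\eta = 1 - \chi_i^{l-1,l-2}$, so $\eta = 0$ on $K_{i,l-2}$ and $\eta = 1$ outside $K_{i,l-1}$, and seek the test pair as a solve of an auxiliary mixed problem on $K_{i,l}$ with data supported in the annulus $K_{i,l-1}\setminus K_{i,l-2}$ — this keeps everything in $V_0(K_{i,l})\times Q(K_{i,l})$ while matching $\psi_j^{(i)}$ deep inside. (2) Derive the energy identity for the error $\psi_j^{(i)} - v$ (resp.\ $q_j^{(i)} - q$), showing it is bounded by the energy of $\psi_j^{(i)}, q_j^{(i)}$ restricted to the annulus, i.e.\ $\|\psi_j^{(i)}-\psi_{j,ms}^{(i)}\|_V^2 + \cdots \lesssim (1+\tfrac1\Lambda)\big(\|\psi_j^{(i)}\|_{V(\Omega\setminus K_{i,l-2})}^2 + \|q_j^{(i)}\|_{s(\Omega\setminus K_{i,l-2})}^2\big)$, using the inf-sup condition \eqref{eq:infsup} and the orthogonality relations \eqref{eq:global1}--\eqref{eq:global2} to handle the $b(\cdot,\cdot)$ cross terms exactly as in the proof of Lemma~\ref{lem:cea}. (3) Prove the iteration inequality: let $X_m := \|\psi_j^{(i)}\|_{V(\Omega\setminus K_{i,m})}^2 + \|q_j^{(i)}\|_{s(\Omega\setminus K_{i,m})}^2$; using the global equations tested against $(\chi_i^{m+1,m})^2$-weighted versions of $\psi_j^{(i)}$ together with the spectral estimate (the factor $(1+\frac{1}{\Lambda})^{-1}$ entering through \eqref{eq:infsup} as in Lemma~\ref{lem:cea}), one shows $X_{m+1} \leq \theta\, X_m$ with $\theta = \big(1 + C^{-1}(1+\tfrac1\Lambda)^{-1/2}\big)^{-1} < 1$, essentially because on the annulus the quantity $X_{m+1}$ can be absorbed into $X_m - X_{m+1}$ up to the constant $C(1+\tfrac1\Lambda)^{1/2}$. (4) Iterate from $m = l-2$ down to $m = 0$, giving $X_{l-2} \leq \theta^{l-2} X_0 \leq \theta^{l-2}\|p_j^{(i)}\|_s^2$ (the last bound from testing \eqref{eq:global2} with $q = q_j^{(i)}$ and Cauchy--Schwarz), and combine with steps (1)--(2) to collect the stated constant $E = C(1+\tfrac1\Lambda)(1+C^{-1}(1+\tfrac1\Lambda)^{-1/2})^{1-l}$ (the exponent $1-l$ absorbing the shift by a couple of layers).

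The main obstacle will be step (3): making the one-step decay inequality rigorous. The difficulty is that cutting off $\psi_j^{(i)}$ produces a divergence term $\nabla\chi\cdot\psi_j^{(i)}$ that lies in $L^2$ but need not lie in $\tilde\kappa Q_{aux}$, so one cannot directly use it as a valid right-hand side for the auxiliary mixed solve; handling this requires projecting via $\pi$ and controlling $(I-\pi)$ of the cutoff error using the spectral gap $\Lambda$ — this is precisely where the $(1+\frac{1}{\Lambda})$ factors and the constant $C$ in the base $\theta$ originate, and getting the constant in the exponent to come out as $1-l$ (rather than, say, $2-l$ or $-l$) requires care in tracking how many layers are consumed by the two nested cutoffs $\chi_i^{l-1,l-2}$. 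The bound $\|v\|_V^2 \leq (1+\tfrac1\Lambda)\|(I-\pi)q\|_s^2$ and the identity $\|\tilde\kappa^{-1/2}\nabla\cdot v\|_{L^2}^2 = \|(I-\pi)q\|_s^2$ from the inf-sup lemma are the technical workhorses that make the absorption possible.
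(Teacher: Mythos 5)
Your outline reproduces the paper's argument almost exactly: reduce via Lemma~\ref{lem:cea} to exhibiting a good test pair in $V_0(K_{i,l})\times Q(K_{i,l})$, prove a one-layer absorption inequality by testing (\ref{eq:global1})--(\ref{eq:global2}) against cutoff versions of $(\psi_j^{(i)},q_j^{(i)})$ on annuli, and iterate with precisely the base $\theta=(1+C^{-1}(1+\frac{1}{\Lambda})^{-1/2})^{-1}$ that you identify. The one place you genuinely depart from the paper is your step (1): you assert that the cutoff of an $H(\mathrm{div})$ field is not in $V_0$ and therefore propose an auxiliary mixed solve with data in the annulus. That premise is incorrect: for Lipschitz $\chi$ one has $\nabla\cdot(\chi\psi)=\chi\,\nabla\cdot\psi+\nabla\chi\cdot\psi\in L^2$, and since $\chi_i^{l,l-1}$ vanishes outside $K_{i,l}$ the normal trace vanishes, so $v=\chi_i^{l,l-1}\psi_j^{(i)}$ and $q=\chi_i^{l,l-1}q_j^{(i)}$ are perfectly admissible; the paper uses them directly. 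The term $\nabla\chi\cdot\psi_j^{(i)}$ is not ``compensated'' but simply estimated, via $|\nabla\chi_i^{l,l-1}|^2\leq C\sum_j|\nabla\chi_j|^2$ (so it is controlled by $\|\psi_j^{(i)}\|_{a(\Omega\setminus K_{i,l-1})}$) together with the identity $\tilde{\kappa}^{-\frac{1}{2}}\nabla\cdot\psi_j^{(i)}=\tilde{\kappa}^{\frac{1}{2}}\pi(q_j^{(i)})$ outside $K_i$, which follows from (\ref{eq:global2}). Your auxiliary-solve detour could likely be made to work but buys nothing and adds a well-posedness argument you would have to supply. One caution for executing your step (3): the quantity to iterate is $\|\psi_j^{(i)}\|_{a(\Omega\setminus K_{i,m})}^2+\|\pi(q_j^{(i)})\|_{s(\Omega\setminus K_{i,m})}^2$, with $\pi q_j^{(i)}$ rather than $q_j^{(i)}$; the conversion between $\|q\|_s$ and $\|\pi q\|_s$ costs a factor $(1+\frac{1}{\Lambda})$ and must be performed only on single annuli (the paper's Steps 2--3), where it is absorbed into the base $\theta$, rather than compounded across layers --- otherwise the per-step factor can exceed one when $\Lambda$ is small and the geometric decay is lost.
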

\begin{proof}
By Lemma \ref{lem:cea},
\begin{align}
\label{eq:decay1}
  \|\psi_{j}^{(i)}-\psi_{j,ms}^{(i)}\|_{V}^{2}+\|q_{j}^{(i)}-q_{j,ms}^{(i)}\|_{s}^{2}
\leq  C(\|\psi_{j}^{(i)}-v\|_{V}^{2}+\|q_{j}^{(i)}-q\|_{s}^{2})
\end{align}
for all $(v,q)\in V_{0}(K_{i,l})\times Q(K_{i,l})$. Next we will choose $v$ and $q$
in order to obtain the required bound.
We define
\begin{align*}
v =\chi_{i}^{l,l-1}\psi_{j}^{(i)}, \quad
q  =\chi_{i}^{l,l-1}q_{j}^{(i)}.
\end{align*}
Then we see that $v\in V_0(K_{i,l})$ and $q\in Q(K_{i,l})$.

\noindent
{\bf Step 1}: In the first step, we will show that
\begin{equation*}
 \|\psi_{j}^{(i)}-\psi_{j,ms}^{(i)}\|_{V}^{2}+\|q_{j}^{(i)}-q_{j,ms}^{(i)}\|_{s}^{2}
 \leq C(\|\psi_{j}^{(i)}\|_{a(\Omega\backslash K_{i,l-1})}^{2}+\|q_{j}^{(i)}\|_{s(\Omega\backslash K_{i,l-1})}^{2}).
\end{equation*}
Using (\ref{eq:decay1}), we need to estimate $\|\psi_{j}^{(i)}-v\|_{V}^{2}$ and $\|q_{j}^{(i)}-q\|_{s}^{2}$.
By the property of $\chi_i^{l,l-1}$, we see that
\begin{equation*}
\|q_{j}^{(i)}-q\|_{s} = \| (1-\chi_i^{l,l-1}) q_j^{(i)} \|_s
\leq \|q_{j}^{(i)}\|_{s(\Omega\backslash K_{i,l-1})}.
\end{equation*}
Similarly, we have
\begin{equation*}
\|\psi_{j}^{(i)}-v\|_{a} = \|(1-\chi_j^{l,l-1}) \psi_{j}^{(i)}\|_{a}
\leq \|\psi_{j}^{(i)}\|_{a(\Omega\backslash K_{i,l-1})}.
\end{equation*}
Notice that
\begin{equation*}
\begin{split}
\tilde{\kappa}^{-\frac{1}{2}} \nabla \cdot (\psi_{j}^{(i)}-v)
&= \tilde{\kappa}^{-\frac{1}{2}} \nabla \cdot ((1-\chi_i^{l,l-1}) \psi_{j}^{(i)}) \\
&=  \tilde{\kappa}^{-\frac{1}{2}}(1-\chi_i^{l,l-1}) \nabla \cdot  \psi_{j}^{(i)}
+  \tilde{\kappa}^{-\frac{1}{2}} (\nabla (1-\chi_i^{l,l-1})) \cdot \psi_{j}^{(i)}
\end{split}
\end{equation*}
By (\ref{eq:global2}), $\tilde{\kappa}^{-\frac{1}{2}}\nabla\cdot\psi_{j}^{(i)}=\tilde{\kappa}^{\frac{1}{2}}\pi(q_{j}^{(i)})$
outside $K_i$. So,
\begin{equation*}
\|  \tilde{\kappa}^{-\frac{1}{2}}(1-\chi_i^{l,l-1}) \nabla \cdot  \psi_{j}^{(i)}\|_{L^2(\Omega)}
\leq \|  \tilde{\kappa}^{-\frac{1}{2}} \nabla \cdot  \psi_{j}^{(i)}\|_{L^2(\Omega \backslash K_{i,l-1})}
\leq \|q_{j}^{(i)}\|_{s(\Omega\backslash K_{i,l-1})}.
\end{equation*}
By assumption on $\chi_i^{l,l-1}$, we have $|\nabla \chi_i^{l,l-1}|^2 \leq C \sum_{i=1} |\nabla \chi_i|^2$.
So,
\begin{equation*}
\|  \tilde{\kappa}^{-\frac{1}{2}} (\nabla (1-\chi_i^{l,l-1})) \cdot \psi_{j}^{(i)}\|_{L^2(\Omega)}^2
\leq \| \psi_{j}^{(i)}\|_{a(\Omega\backslash K_{i,l-1})}^2.
\end{equation*}
This completes the proof.

\noindent
{\bf Step 2}: In the second step, we will show that
\begin{equation*}
\|\psi_{j}^{(i)}\|_{a(\Omega\backslash K_{i,l-1})}^{2}+\|q_{j}^{(i)}\|_{s(\Omega\backslash K_{i,l-1})}^{2}
\leq C (1+\frac{1}{\Lambda}) \Big( (\|\psi_{j}^{(i)}\|_{a(\Omega\backslash K_{i,l-1})}^{2}+\|\pi q_{j}^{(i)}\|_{s(\Omega\backslash K_{i,l-1})}^{2} \Big).
\end{equation*}
To do so, we note that $\|q_{j}^{(i)}\|_{s(\Omega\backslash K_{i,l-1})}^{2}
= \|\pi q_{j}^{(i)}\|_{s(\Omega\backslash K_{i,l-1})}^{2} + \|(I-\pi)q_{j}^{(i)}\|_{s(\Omega\backslash K_{i,l-1})}^{2}$.
For each coarse element $K_m \subset \Omega\backslash K_{i,l-1}$, we let
$r_m$ be the restriction of $(I-\pi)q_{j}^{(i)}$ on $K_m$. Then we can write
$r_m = \sum_{n>J_m} d_n^{(m)} p_n^{(m)}$.
We define $u_m =  \sum_{n>J_m} (\lambda_n^{(m)})^{-1} d_n^{(m)} \phi_n^{(m)}$.
Notice that, by the spectral problem (\ref{eq:spectral}), we have
$\tilde{\kappa}^{-\frac{1}{2}}\nabla\cdot\phi_{n}^{(m)}=\tilde{\kappa}^{\frac{1}{2}} \lambda_n^{(m)} p_{n}^{(m)}$.
Thus, by the orthogonality of eigenfunctions,
\begin{equation*}
\| (I-\pi) q_j^{(i)}\|_{s(K_m)}^2=
\|r_m\|^2_{s(K_m)} = s_m(r_m,z_m)
= \int_{K_m} z_m \nabla \cdot u_m
\end{equation*}
where $z_m$ is the restriction of $q_j^{(i)}$ on $K_m$.
Summing the above over all $K_m \subset \Omega\backslash K_{i,l-1}$, we have
\begin{equation*}
\|(I-\pi)q_{j}^{(i)}\|_{s(\Omega\backslash K_{i,l-1})}^{2}
= \int_{\Omega\backslash K_{i,l-1}} q_j^{(i)} \nabla\cdot u
\end{equation*}
where $u = \sum_{K_m \subset \Omega\backslash K_{i,l-1}} u_m \in V_0(\Omega\backslash K_{i,l-1}) \subset V_0$.
Using (\ref{eq:global1}), we have
\begin{equation*}
\|(I-\pi)q_{j}^{(i)}\|_{s(\Omega\backslash K_{i,l-1})}^{2}
= \int_{\Omega\backslash K_{i,l-1}} \kappa^{-1} \psi_j^{(i)} \cdot u
\leq \|\psi_{j}^{(i)}\|_{a(\Omega\backslash K_{i,l-1})} \, \|u\|_{a(\Omega\backslash K_{i,l-1})}.
\end{equation*}
To estimate $\|u\|_{a(\Omega\backslash K_{i,l-1})}$, for any $K_m \subset \Omega\backslash K_{i,l-1}$,
using the spectral problem (\ref{eq:spectral}), we obtain
\begin{equation*}
\int_{K_m} \kappa^{-1} u\cdot u = \sum_{n> J_m} (\lambda_n^{(m)})^{-1} (d_n^{(m)})^2 \| p_n^{(m)}\|_{s(K_m)}^2.
\end{equation*}
Since $\| p_n^{(m)}\|_{s(K_m)}=1$, we have
\begin{equation*}
\int_{K_m} \kappa^{-1} u\cdot u \leq \frac{1}{\Lambda} \|r_m\|_{s(K_m)}^2.
\end{equation*}
Summing the above over all $K_m \subset \Omega\backslash K_{i,l-1}$, we have
\begin{equation*}
\|u\|_{a(\Omega\backslash K_{i,l-1})}^2 \leq \frac{1}{\Lambda} \| (I-\pi) q_j^{(i)}\|_s^2.
\end{equation*}
This completes the proof.

\noindent
{\bf Step 3}: We remark that, by using a proof similar to that of Step 2, we obtain
the following result.
\begin{equation*}
\|\psi_{j}^{(i)}\|_{a( K_{i,m} \backslash K_{i,m-1})}^{2}+\|q_{j}^{(i)}\|_{s(K_{i,m}\backslash K_{i,m-1})}^{2}
\leq C (1+\frac{1}{\Lambda}) \Big( (\|\psi_{j}^{(i)}\|_{a(K_{i,m}\backslash K_{i,m-1})}^{2}+\|\pi q_{j}^{(i)}\|_{s(K_{i,m}\backslash K_{i,m-1})}^{2} \Big)
\end{equation*}
for any $m \geq 1$.

\noindent
{\bf Step 4}: In the fourth step, we will prove the following inequality
\begin{equation*}
\|\psi_{j}^{(i)}\|_{a(\Omega\backslash K_{i,l-1})}^{2}+\|\pi(q_{j}^{(i)})\|_{s(\Omega\backslash K_{i,l-1})}^{2}
\leq
 C(1+\cfrac{1}{\Lambda})^{\frac{1}{2}}
 (\|\psi_{j}^{(i)}\|_{a(K_{i,l-1}\backslash K_{i,l-2})}^{2}+\|\pi(q_{j}^{(i)})\|_{s(K_{i,l-1}\backslash K_{i,l-2})}^{2}).
\end{equation*}
To do so,
we let $\xi=1-\chi_{i}^{l-1,l-2}$. Then, using (\ref{eq:global1}),
\begin{align*}
\|\psi_{j}^{(i)}\|_{a(\Omega\backslash K_{i,l-1})}^{2} & \leq a(\psi_{j}^{(i)},\xi\psi_{j}^{(i)})=b(\xi\psi_{j}^{(i)},q_{j}^{(i)})\\
 & =\int_{K_{i,l-1}\backslash K_{i,l-2}} (\nabla\xi\cdot\psi_{j}^{(i)})q_{j}^{(i)}+b(\psi_{j}^{(i)},\xi q_{j}^{(i)}).
\end{align*}
In addition, using (\ref{eq:global2}),
\begin{align*}
\|\pi(q_{j}^{(i)})\|_{s(\Omega\backslash K_{i,l-1})}^{2} & =s(\pi(q_{j}^{(i)}),\pi(\xi q_{j}^{(i)}))-\int_{K_{l-1}\backslash K_{l-2}}\tilde{\kappa}\pi(q_{j}^{(i)})\pi(\xi q_{j}^{(i)})\\
 & =-b(\psi_{j}^{(i)},\xi q_{j}^{(i)})-\int_{K_{l-1}\backslash K_{l-2}}\tilde{\kappa}\pi(q_{j}^{(i)})\pi(\xi q_{j}^{(i)}).
\end{align*}
Adding the above two equations, we have
\begin{align*}
\|\psi_{j}^{(i)}\|_{a(\Omega\backslash K_{i,l-1})}^{2}+\|\pi(q_{j}^{(i)})\|_{s(\Omega\backslash K_{i,l-1})}^{2} & \leq \int_{K_{i,l-1}\backslash K_{i,l-2}}(\nabla\xi\cdot\psi_{j}^{(i)})q_{j}^{(i)}-\int_{K_{l-1}\backslash K_{l-2}}\tilde{\kappa}\pi(q_{j}^{(i)})\pi(\xi q_{j}^{(i)})\\
 & \leq C(\|\psi_{j}^{(i)}\|_{a(K_{i,l-1}\backslash K_{i,l-2})}+\|\pi(q_{j}^{(i)})\|_{s((K_{i,l-1}\backslash K_{i,l-2}))})\|q_{j}^{(i)}\|_{s(K_{i,l-1}\backslash K_{i,l-2})}\\
 & \leq C(1+\cfrac{1}{\Lambda})^{\frac{1}{2}}
 (\|\psi_{j}^{(i)}\|_{a(K_{i,l-1}\backslash K_{i,l-2})}^{2}+\|\pi(q_{j}^{(i)})\|_{s(K_{i,l-1}\backslash K_{i,l-2})}^{2}).
\end{align*}
where the last step follows from Step 3.

\noindent
{\bf Step 5}: In the final step, we will prove the desired estimate.
We notice that
\begin{eqnarray*}
\|\psi_{j}^{(i)}\|_{a(\Omega\backslash K_{i,l-2})}^{2}+\|\pi(q_{j}^{(i)})\|_{s(\Omega\backslash K_{i,l-2})}^{2} & = & \|\psi_{j}^{(i)}\|_{a(\Omega\backslash K_{i,l-1})}^{2}+\|\pi(q_{j}^{(i)})\|_{s(\Omega\backslash K_{i,l-1})}^{2}\\
 &  & +\|\psi_{j}^{(i)}\|_{a(K_{i,l-1}\backslash K_{i,l-2})}^{2}+\|\pi(q_{j}^{(i)})\|_{s(K_{i,l-1}\backslash K_{i,l-2})}^{2}\\
 & \geq & (1+C^{-1} (1+\frac{1}{\Lambda})^{-\frac{1}{2}})\|\psi_{j}^{(i)}\|_{a(\Omega\backslash K_{i,l-1})}^{2}+\|\pi(q_{j}^{(i)})\|_{s(\Omega\backslash K_{i,l-1})}^{2}.
\end{eqnarray*}
Using the above inequality, we obtain
\[
\|\psi_{j}^{(i)}\|_{a(\Omega\backslash K_{i,l-1})}^{2}+\|\pi(q_{j}^{(i)})\|_{s(\Omega\backslash K_{i,l-1})}^{2}
\leq  (1+C^{-1} (1+\frac{1}{\Lambda})^{-\frac{1}{2}})^{1-l}
\Big( \|\psi_{j}^{(i)}\|_{a}^{2}+\|\pi(q_{j}^{(i)})\|_{s}^{2} \Big).
\]
The proof of the lemma is completed by using (\ref{eq:global1})-(\ref{eq:global2}).
\end{proof}

\subsection{Stability and convergence of using multiscale basis functions}\label{sec:a4}

In this section, we prove the stability and the convergence of the multiscale
method (\ref{eq:method1})-(\ref{eq:method2}).
For the results in this section, we assume the size of oversample region $K_{i,l}$ is large enough
so that $E$ is small.
First we prove the following inf-sup condition
for the scheme (\ref{eq:method1})-(\ref{eq:method2}).

\begin{lemma}
\label{lem:ms-infsup}
Assume that $C l^{2d} E (1+C_{glo}^2) \leq \alpha < 1$.
For any $q_0\in Q_{aux}$ with $s(q_0,1)=0$, there is $u \in V_{ms}$ such that
\begin{equation}
\label{eq:ms-infsup}
\| q_0\|_s \leq C_{ms} \frac{b(u,q_0)}{\|u\|_V}
\end{equation}
where $C_{ms}$ is a constant.
\end{lemma}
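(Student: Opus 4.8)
The plan is to transfer the global inf-sup condition \eqref{eq:glo-infsup} to the multiscale space by replacing the global test velocity $w \in V_{glo}$ with a sum of localized pieces, and then control the error introduced by localization using the decay estimate of Lemma~\ref{lem:decay}. Concretely, given $q_0 \in Q_{aux}$ with $s(q_0,1)=0$, write $q_0 = \sum_{i,j} c_j^{(i)} p_j^{(i)}$ in the eigenbasis and consider the associated global velocity $w = \sum_{i,j} c_j^{(i)} \psi_j^{(i)} \in V_{glo}$, which by Lemma~\ref{lem:Vglo} solves the mixed problem \eqref{eq:glo-mixed} with data $s(q_0,\cdot)$; the proof of \eqref{eq:glo-infsup} then gives $b(w,q_0) = \|q_0\|_s^2$ and $\|w\|_V \leq C_{glo}\|q_0\|_s$. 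Now set $u = \sum_{i,j} c_j^{(i)} \psi_{j,ms}^{(i)} \in V_{ms}$, the corresponding combination of localized basis functions. Then $b(u,q_0) = b(w,q_0) - b(w-u,q_0) = \|q_0\|_s^2 - b(w-u,q_0)$, so I need to show $|b(w-u,q_0)|$ is a small fraction of $\|q_0\|_s^2$ and that $\|u\|_V$ is comparable to $\|w\|_V$.

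The key estimate is a bound on $\|w-u\|_V = \|\sum_{i,j} c_j^{(i)}(\psi_j^{(i)} - \psi_{j,ms}^{(i)})\|_V$. By Lemma~\ref{lem:decay}, each term satisfies $\|\psi_j^{(i)}-\psi_{j,ms}^{(i)}\|_V^2 + \|q_j^{(i)}-q_{j,ms}^{(i)}\|_s^2 \leq E\|p_j^{(i)}\|_s^2$. Summing requires a Cauchy–Schwarz / finite-overlap argument: although the supports $K_i^+ = K_{i,l}$ overlap, each point lies in at most $O(l^{2d})$ of them (in $d$ dimensions a ball of radius $l$ coarse layers around a coarse block overlaps $O(l^{2d})$ such balls — this is where the factor $l^{2d}$ in the hypothesis $Cl^{2d}E(1+C_{glo}^2)\leq\alpha<1$ comes from). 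Hence $\|w-u\|_V^2 \leq C\,l^{2d}\,E \sum_{i,j}(c_j^{(i)})^2\|p_j^{(i)}\|_s^2 = C\,l^{2d}\,E\,\|q_0\|_s^2$. Then $|b(w-u,q_0)| \leq \|\tilde\kappa^{-1/2}\nabla\cdot(w-u)\|_{L^2(\Omega)}\|q_0\|_s \leq \|w-u\|_V\|q_0\|_s \leq (Cl^{2d}E)^{1/2}\|q_0\|_s^2$, and similarly $\|u\|_V \leq \|w\|_V + \|w-u\|_V \leq (C_{glo} + (Cl^{2d}E)^{1/2})\|q_0\|_s$. Combining, $b(u,q_0) \geq (1 - (Cl^{2d}E)^{1/2})\|q_0\|_s^2$, and under the assumption $Cl^{2d}E(1+C_{glo}^2)\leq\alpha<1$ this is bounded below by $(1-\sqrt\alpha)\|q_0\|_s^2 > 0$, while $\|u\|_V \leq (1+\sqrt\alpha)\,C_{glo}\,\|q_0\|_s$ roughly, giving \eqref{eq:ms-infsup} with $C_{ms} = (1+\sqrt\alpha)C_{glo}/(1-\sqrt\alpha)$ or a similar explicit constant.

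The main obstacle is the overlap/summation step: I must justify carefully that a linear combination of \emph{localized} basis functions, each a genuine nonlinear object obtained by solving a saddle-point problem on its own oversampled patch, still obeys an $\ell^2$-type bound with only a polynomial-in-$l$ loss. This needs the bounded-overlap property of the cover $\{K_{i,l}\}$ together with the fact that $\|p_j^{(i)}\|_s = 1$ (from the normalization $s_i(p_j^{(i)},p_j^{(i)})=1$), so that the right-hand side of the Lemma~\ref{lem:decay} bound collapses to $E$ times the coefficient squared. A secondary technical point is that $\|w-u\|_V$ controls $b(w-u,q_0)$ through the divergence term only (since $b(\cdot,q_0)$ pairs $\nabla\cdot(w-u)$ against $q_0$), which is exactly the quantity included in the $\|\cdot\|_V$ norm, so no extra inf-sup is needed there. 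Everything else is Cauchy–Schwarz, the triangle inequality, and bookkeeping of the constants so that the hypothesis on $\alpha$ closes the estimate.
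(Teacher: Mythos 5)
Your overall strategy is the same as the paper's: take the global inf-sup test function $w\in V_{glo}$ for $q_0$, replace each global basis function in its expansion by its localized counterpart to get $u\in V_{ms}$, and control $b(w-u,q_0)$ and $\|w-u\|_V$ via Lemma~\ref{lem:decay} together with the bounded overlap of the patches $K_{i,l}$ (the $l^{2d}$ factor). However, there is a genuine gap at the very first step. You set $q_0=\sum_{i,j}c_j^{(i)}p_j^{(i)}$ and claim that $w:=\sum_{i,j}c_j^{(i)}\psi_j^{(i)}$ solves the mixed problem (\ref{eq:glo-mixed}) with data $s(q_0,\cdot)$, hence $b(w,q_0)=\|q_0\|_s^2$. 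This is false: by linearity of (\ref{eq:global1})--(\ref{eq:global2}), that combination is $G_1(q_0)$, which satisfies $s(\pi r,\pi q)+b(w,q)=s(q_0,q)$ with an extra term $s(\pi r,\pi q)$ ($r$ being the associated pressure), so $b(w,q_0)=\|q_0\|_s^2-s(\pi r,q_0)$ and the sign and size of the correction are not controlled. Lemma~\ref{lem:Vglo} only says the two maps have the same \emph{image}; it does not identify $G_1(q_0)$ with the solution of (\ref{eq:aux-mixed}) for the datum $q_0$.

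If you instead take $w$ to be the genuine solution of (\ref{eq:glo-mixed}) (for which $b(w,q_0)=\|q_0\|_s^2$ does hold), then its expansion $w=\sum_{i,j}d_j^{(i)}\psi_j^{(i)}$ has coefficients determined by the pressure component $p=\sum_{i,j}d_j^{(i)}q_j^{(i)}$, and these $d_j^{(i)}$ are \emph{not} the eigen-coefficients of $q_0$. Your key summation step then needs a bound on $\sum_{i,j}(d_j^{(i)})^2=\|\tilde p\|_s^2$ with $\tilde p=\sum_{i,j}d_j^{(i)}p_j^{(i)}$, which does not collapse to $\|q_0\|_s^2$ by orthonormality. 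The paper obtains $\|\tilde p\|_s^2\leq(1+C_{glo}^2)\|q_0\|_s^2$ by testing the system satisfied by $(w,p)$ with $q=\tilde p$ and invoking the global inf-sup condition to bound $\|\pi p\|_s\leq C_{glo}\|w\|_V$; this is precisely the origin of the factor $(1+C_{glo}^2)$ in the hypothesis $Cl^{2d}E(1+C_{glo}^2)\leq\alpha$, a factor your argument never uses and cannot account for. The remainder of your proposal (Cauchy--Schwarz on the divergence term, triangle inequality for $\|u\|_V$, and the final assembly of $C_{ms}$) matches the paper, but the proof does not close without this missing coefficient estimate.
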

\begin{proof}
Let $q_0\in Q_{aux}$ such that $s(q_0,1)=0$. By using the proof of (\ref{eq:glo-infsup}), there is $w\in V_{glo}$
and $p \in \text{span} \{ q_j^{(i)}\}$
such that $\|w\|_V \leq C_{glo} \|q_0\|_s$,
\begin{align*}
a(w,v)-b(v,p)&=0, \quad \;\;\forall v\in V_0,\\
b(w,q) &= s(q_0,q), \quad \;\;\forall q\in Q,
\end{align*}
and $s(p,1)=0$.
Note that, since $p \in \text{span} \{ q_j^{(i)}\}$, we can write
\begin{equation*}
p = \sum_{i=1}^N \sum_{j=1}^{J_i} d_j^{(i)} q_j^{(i)}.
\end{equation*}
Using (\ref{eq:global1})-(\ref{eq:global2}), we can see that
\begin{equation*}
w = \sum_{i=1}^N \sum_{j=1}^{J_i} d_j^{(i)} \psi_j^{(i)}.
\end{equation*}
The above motivates the definition of $u\in V_{ms}$, which is given by
\begin{equation*}
u = \sum_{i=1}^N \sum_{j=1}^{J_i} d_j^{(i)} \psi_{j,ms}^{(i)}.
\end{equation*}
We call $u$ the projection of $w\in V_{glo}$ into the space $V_{ms}$.

Next, we note that
\begin{equation}
\label{eq:temp4}
b(u,q_0) = b(w,q_0) + b(u-w,q_0) = \|q_0\|_s^2 + b(u-w,q_0).
\end{equation}
By the Cauchy-Schwarz inequality, we have
\begin{equation*}
b(u-w,q_0) \leq \| \tilde{\kappa}^{-\frac{1}{2}} \nabla\cdot (u-w)\|_{L^2(\Omega)} \, \|q_0\|_s.
\end{equation*}
By the definition of $u$ and $w$, we have
\begin{equation*}
\| \tilde{\kappa}^{-\frac{1}{2}} \nabla\cdot (u-w)\|_{L^2(\Omega)}^2
\leq C l^{2d} \sum_{i=1}^N \sum_{j=1}^{J_i} (d_j^{(i)})^2
\| \tilde{\kappa}^{-\frac{1}{2}} \nabla\cdot (\psi_j^{(i)}-\psi_{j,ms}^{(i)})\|_{L^2(\Omega)}^2.
\end{equation*}
By Lemma \ref{lem:decay},
\begin{equation}
\label{eq:temp3}
\| u-w\|_a^2 +
\| \tilde{\kappa}^{-\frac{1}{2}} \nabla\cdot (u-w)\|_{L^2(\Omega)}^2
\leq C l^{2d} E\sum_{i=1}^N \sum_{j=1}^{J_i} (d_j^{(i)})^2.
\end{equation}
Using the system (\ref{eq:global1})-(\ref{eq:global2}) and the definitons of $p$ and $w$,
we see that $p$ and $w$ satisfy the following
\begin{equation}
\label{eq:temp1}
\begin{split}
a(w,v) - b(v,p) &= 0, \quad \forall v\in V_0, \\
s(\pi p, \pi q) + b(w,q) &= s(\tilde{p},q), \quad\forall q\in Q,
\end{split}
\end{equation}
where $\tilde{p} =  \sum_{i=1}^N \sum_{j=1}^{J_i} d_j^{(i)} p_{j}^{(i)}$.
Since $w\in V_{glo}$ and $V_{glo} \subset V_0$, the above can be written as
\begin{equation}
\label{eq:temp2}
\begin{split}
a(w,v) - b(v,p) &= 0, \quad \forall v\in V_{glo}, \\
s(\pi p, \pi q) + b(w,q) &= s(\tilde{p},q), \quad\forall q\in Q,
\end{split}
\end{equation}
Let $q=\tilde{p}$ in the second equation of system (\ref{eq:temp2}), we have
\begin{equation*}
\| \tilde{p}\|_s^2
= s(\pi p, \pi \tilde{p}) + b(w,\tilde{p}) = s(\pi p, \pi \tilde{p}) + s(q_0,\tilde{p})
\leq \|\pi p\|_s \, \|\tilde{p}\|_s + \|q_0\|_s \, \|\tilde{p}\|_s.
\end{equation*}
By (\ref{eq:temp2}), $\pi p \in Q_{aux}$ and
the inf-sup condition (\ref{eq:glo-infsup}), $\| \pi p\|_s  \leq C_{glo} \|w\|_V$.
Combining the above, we have
\begin{equation*}
\| \tilde{p}\|_s^2 \leq (1+C_{glo}^2) \| q_0\|_s^2.
\end{equation*}
We note that
\begin{equation*}
\| \tilde{p}\|_s^2 = \sum_{i=1}^N \sum_{j=1}^{J_i} (d_j^{(i)})^2.
\end{equation*}

Finally, using (\ref{eq:temp3}), we have
\begin{equation*}
\| \tilde{\kappa}^{-\frac{1}{2}} \nabla\cdot (u-w)\|_{L^2(\Omega)}^2
\leq C l^{2d} E (1+C_{glo}^2) \| q_0\|_s^2 \leq \alpha \|q_0\|_s^2.
\end{equation*}
Next, using (\ref{eq:temp4}), we have
\begin{equation*}
b(u,q_0) \geq (1-\alpha) \|q_0\|_s^2.
\end{equation*}
On the other hand, by (\ref{eq:temp3}),
\begin{equation*}
\|u\|_V^2 \leq 2 \|w\|_V^2 + 2 \|u-w\|_V^2
\leq 2 C_{glo}^2 \|q_0\|_s^2 + 2 \alpha \|q_0\|_s^2.
\end{equation*}
Hence, we take
\begin{equation*}
C_{ms}^2 = 2 \frac{ C_{glo}^2 +\alpha}{(1-\alpha)^2}.
\end{equation*}
This completes the proof.

\end{proof}

Finally, we state and prove the main convergence theorem.

\begin{theorem}
Assume that $(1+C_{ms})^2\alpha = O(1)$.
Let $u_{ms}$ be the multiscale solution of (\ref{eq:method1})-(\ref{eq:method2}) and
let $u_{snap}$ be the snapshot solution of (\ref{eq:snap1})-(\ref{eq:snap2}).
Then we have
\begin{align*}
\|u_{snap}-u_{ms}\|_{a}^{2} + \| \pi(p_{snap}) - p_{ms}\|_s^2
& \leq C \,\|\tilde{\kappa}^{-1}f\|_{s}^{2}.
\end{align*}
When $\{ \chi_i\}$ is the standard bilinear functions, we have
\begin{align*}
\|u_{snap}-u_{ms}\|_{a}^{2} & \leq C H^2 \,\|f\|_{L^2(\Omega)}^{2}.
\end{align*}
\end{theorem}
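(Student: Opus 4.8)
The plan is to keep $(u_{snap},p_{snap})$ as the reference and to build a good approximation of $u_{snap}$ inside $V_{ms}$. Two structural facts drive everything. First, by the second equation (\ref{eq:multiscale2}) of the basis construction, $\nabla\cdot v\in\tilde\kappa Q_{aux}$ for every $v\in V_{ms}$; testing (\ref{eq:method2}) against $Q_{aux}$ (and recalling $(f,q)=s(\pi(\tilde\kappa^{-1}f),q)$) and (\ref{eq:snap2}) against all of $Q$ then gives $\nabla\cdot u_{ms}=\tilde\kappa\,\pi(\tilde\kappa^{-1}f)=\nabla\cdot u_{snap}$. Second, subtracting (\ref{eq:method1}) from the velocity equation in (\ref{eq:snap1}) and replacing $p_{snap}$ by $p_{glo}=\pi p_{snap}$ (Lemma \ref{lem:snap-bound}; legitimate because $\nabla\cdot v\in\tilde\kappa Q_{aux}$ for $v\in V_{ms}$), one obtains the Galerkin-type identity
\[
a(u_{snap}-u_{ms},v)=b(v,p_{glo}-p_{ms}),\qquad\forall v\in V_{ms}.
\]
Hence, for any $w\in V_{ms}$ with $\nabla\cdot w=\nabla\cdot u_{ms}$ we have $b(w-u_{ms},p_{glo}-p_{ms})=0$, so $\|u_{snap}-u_{ms}\|_a^2=a(u_{snap}-u_{ms},u_{snap}-w)$ and thus $\|u_{snap}-u_{ms}\|_a\le\|u_{snap}-w\|_a$. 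Everything reduces to producing $w\in V_{ms}$ with the correct divergence and with $\|u_{snap}-w\|_a$ small.

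Since $u_{snap}=u_{glo}\in V_{glo}$ (Lemmas \ref{lem:Vglo} and \ref{lem:snap-bound}), write $u_{glo}=\sum_{i,j}d_j^{(i)}\psi_j^{(i)}$ and set $\tilde u=\sum_{i,j}d_j^{(i)}\psi_{j,ms}^{(i)}\in V_{ms}$. From (\ref{eq:global2}) and (\ref{eq:multiscale2}) one has $\nabla\cdot\psi_j^{(i)}=\tilde\kappa(p_j^{(i)}-\pi q_j^{(i)})$ and $\nabla\cdot\psi_{j,ms}^{(i)}=\tilde\kappa(p_j^{(i)}-\pi q_{j,ms}^{(i)})$, whence $\nabla\cdot(\tilde u-u_{ms})=\tilde\kappa\rho$ with $\rho:=\sum_{i,j}d_j^{(i)}\pi(q_j^{(i)}-q_{j,ms}^{(i)})\in Q_{aux}$; moreover, testing (\ref{eq:global2}) and (\ref{eq:multiscale2}) with $q\equiv1$ gives $s(\rho,1)=0$. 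Then solve the auxiliary saddle-point problem in $V_{ms}\times Q_{aux}$ with right-hand side $q\mapsto-s(\rho,q)$ — well posed by Lemma \ref{lem:ms-infsup}, the coercivity of $a$ on the kernel being automatic since $\|\cdot\|_V=\|\cdot\|_a$ on divergence-free fields — to obtain $\delta w\in V_{ms}$ with $\nabla\cdot\delta w=-\tilde\kappa\rho$ and $\|\delta w\|_a\le C\,C_{ms}\|\rho\|_s$. Put $w:=\tilde u+\delta w$; then $\nabla\cdot w=\nabla\cdot u_{ms}$ as required.

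It remains to estimate $\|u_{snap}-w\|_a\le\|u_{snap}-\tilde u\|_a+\|\delta w\|_a$. Using the finite-overlap property of the oversampled neighbourhoods $K_{i,l}$ together with the decay bound of Lemma \ref{lem:decay} gives $\|u_{snap}-\tilde u\|_a^2\le C\,l^{2d}E\sum_{i,j}(d_j^{(i)})^2$ and, likewise, $\|\rho\|_s^2\le C\,l^{2d}E\sum_{i,j}(d_j^{(i)})^2$. The coefficient norm $\sum_{i,j}(d_j^{(i)})^2=\|\sum_{i,j}d_j^{(i)}p_j^{(i)}\|_s^2$ is bounded by repeating the inf-sup computation from the proof of Lemma \ref{lem:ms-infsup}, now with data $s(\pi(\tilde\kappa^{-1}f),\cdot)$ and using $\nabla\cdot u_{glo}=\tilde\kappa\pi(\tilde\kappa^{-1}f)$, yielding $\sum_{i,j}(d_j^{(i)})^2\le C(1+C_{glo}^2)\|\tilde\kappa^{-1}f\|_s^2$. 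Since $\alpha=C\,l^{2d}E(1+C_{glo}^2)$, this gives $\|u_{snap}-\tilde u\|_a^2\le C\alpha\|\tilde\kappa^{-1}f\|_s^2$ and $\|\delta w\|_a^2\le C\,C_{ms}^2\alpha\|\tilde\kappa^{-1}f\|_s^2$, so
\[
\|u_{snap}-u_{ms}\|_a^2\le\|u_{snap}-w\|_a^2\le C(1+C_{ms}^2)\alpha\,\|\tilde\kappa^{-1}f\|_s^2,
\]
which is $\le C\|\tilde\kappa^{-1}f\|_s^2$ by the hypothesis $(1+C_{ms})^2\alpha=O(1)$. For the pressure, (\ref{eq:ms-infsup}) and the Galerkin identity give $\|\pi(p_{snap})-p_{ms}\|_s=\|p_{glo}-p_{ms}\|_s\le C_{ms}\|u_{snap}-u_{ms}\|_a\le C_{ms}\,C\|\tilde\kappa^{-1}f\|_s$; absorbing $C_{ms}$ into the constant yields the first displayed bound. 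The second is then immediate: for the bilinear partition of unity $\tilde\kappa=\kappa\sum_j|\nabla\chi_j|^2\ge c\,H^{-2}$, so $\|\tilde\kappa^{-1}f\|_s^2=\int_\Omega\tilde\kappa^{-1}|f|^2\le C H^2\|f\|_{L^2(\Omega)}^2$.

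The main obstacle is the bookkeeping of the contrast. The bound $\sum_{i,j}(d_j^{(i)})^2\lesssim(1+C_{glo}^2)\|\tilde\kappa^{-1}f\|_s^2$ carries a contrast-dependent factor, but this is precisely the factor absorbed by the exponentially small $E$ through the definition of $\alpha$, and the $C_{ms}$ produced by the divergence-correcting solve $\delta w$ is precisely the one killed by the assumption $(1+C_{ms})^2\alpha=O(1)$. The argument fails to close if one loses an extra power of $(1+C_{glo}^2)$, so the two delicate steps are (i) reproducing the Lemma \ref{lem:ms-infsup} computation so that the coefficients are controlled by only one power of $(1+C_{glo}^2)$, and (ii) constructing $\delta w$ with a bound no worse than $C\,C_{ms}\|\rho\|_s$, for which one uses that $\|\cdot\|_V$ and $\|\cdot\|_a$ coincide on divergence-free elements of $V_{ms}$.
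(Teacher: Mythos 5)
Your proof is correct and reaches the paper's bounds, but the velocity estimate is organized along a genuinely different (though closely related) route. The paper uses the standard C\'ea-type argument for mixed methods: from the Galerkin identity it derives $\|u_{snap}-u_{ms}\|_a\le(1+C_{ms})\inf_{v\in V_{ms}}\|u_{snap}-v\|_V$, where the discrete inf-sup of Lemma~\ref{lem:ms-infsup} is invoked to absorb the pressure term $\|\pi(p_{snap})-p_{ms}\|_s$ produced by the $b$-form, and then takes $v$ to be the naive coefficient-wise localization of $u_{glo}$. You instead first establish $\nabla\cdot u_{ms}=\tilde{\kappa}\,\pi(\tilde{\kappa}^{-1}f)=\nabla\cdot u_{snap}$ (valid, since $\nabla\cdot V_{ms}\subset\tilde{\kappa}Q_{aux}$ and $s$ is an inner product on $Q_{aux}$), so that restricting competitors to divergence-matching $w\in V_{ms}$ kills the $b$-term outright and yields the sharper $\|u_{snap}-u_{ms}\|_a\le\|u_{snap}-w\|_a$; the price is the divergence-correcting solve for $\delta w$, whose existence and bound $\|\delta w\|_a\le C\,C_{ms}\|\rho\|_s$ again rest on Lemma~\ref{lem:ms-infsup} (plus coercivity of $a$ on the divergence-free kernel, which you correctly note makes $\|\cdot\|_V=\|\cdot\|_a$ there). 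Your identification $\nabla\cdot(\tilde u-u_{ms})=\tilde{\kappa}\rho$ with $\rho=\sum_{i,j}d_j^{(i)}\pi(q_j^{(i)}-q_{j,ms}^{(i)})$ and the compatibility check $s(\rho,1)=0$ are correct. Both routes consume the same ingredients --- Lemma~\ref{lem:decay}, the coefficient bound $\sum_{i,j}(d_j^{(i)})^2\le(1+C_{glo}^2)\|\pi(\tilde{\kappa}^{-1}f)\|_s^2$ extracted from the proof of Lemma~\ref{lem:ms-infsup}, and the finite overlap of the regions $K_{i,l}$ --- and your pressure estimate and the passage to the $O(H)$ bound via $\tilde{\kappa}\ge cH^{-2}$ coincide with the paper's. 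What your version buys is a velocity bound free of the extra $(1+C_{ms})$ quasi-optimality factor; what it costs is the auxiliary saddle-point solve, which reintroduces $C_{ms}$ through $\delta w$, so the final constants are equivalent under the hypothesis $(1+C_{ms})^2\alpha=O(1)$.
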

\begin{proof}
Using  (\ref{eq:snap1})-(\ref{eq:snap2}) and (\ref{eq:method1})-(\ref{eq:method2}), we have
\begin{align}
a(u_{snap}-u_{ms},v) - b(v,p_{snap}-p_{ms}) &=0, \quad \;\forall v\in V_{ms}\subset V_{0},\label{eq:conv1}\\
b(u_{snap}-u_{ms},q) & =0, \quad\;\forall q\in Q_{aux}\subset Q. \label{eq:conv2}
\end{align}
By the construction of the multiscale basis (\ref{eq:multiscale2}), we have $\nabla\cdot V_{ms} \subset \tilde{\kappa} Q_{aux}$. Therefore, we have
\begin{align}
b(v,\pi (p_{snap})-p_{snap}) =0, \quad \;\forall v\in V_{ms}\subset V_{0}.\label{eq:conv3}
\end{align}
Thus, for any $v\in V_{ms}$, we have
\begin{equation}
\label{eq:conv4}
\begin{split}
a(u_{snap}-u_{ms},u_{snap}-u_{ms}) & =a(u_{snap}-u_{ms},u_{snap}-v)+a(u_{snap}-u_{ms},v-u_{ms})\\
 & =a(u_{snap}-u_{ms},u_{snap}-v)+b(v-u_{ms},p_{snap}-p_{ms})\\
 & =a(u_{snap}-u_{ms},u_{snap}-v)+b(v-u_{ms},\pi(p_{snap})-p_{ms}).
 \end{split}
\end{equation}
where we used (\ref{eq:conv1}) and (\ref{eq:conv3}).
Next, taking $q=\pi(p_{snap})-p_{ms}$ in (\ref{eq:conv2}), we have
\begin{align*}
b(u_{snap}-u_{ms},\pi(p_{snap})-p_{ms}) & =0.
\end{align*}
Hence, (\ref{eq:conv4}) becomes
\begin{equation*}
a(u_{snap}-u_{ms},u_{snap}-u_{ms})
=a(u_{snap}-u_{ms},u_{snap}-v)+b(v-u_{snap},\pi(p_{snap})-p_{ms}).
\end{equation*}
Consequently, we have
\begin{align*}
\|u_{snap}-u_{ms}\|_{a}^{2} & \leq\|u_{snap}-u_{ms}\|_{a}\|u_{snap}-v\|_{a}-b(u_{snap}-v,\pi(p_{snap})-p_{ms})\\
 & \leq\|u_{snap}-v\|_{V}(\|u_{snap}-u_{ms}\|_{a}+\|\pi(p_{snap})-p_{ms}\|_{s}).
\end{align*}
Using the inf-sup condition (\ref{eq:ms-infsup}), there is $w\in V_{ms}$ such that
\begin{equation}
\label{eq:conv5}
\begin{split}
\|\pi(p_{snap})-p_{ms}\|_{s} & \leq C_{ms}\cfrac{b(w,\pi(p_{snap})-p_{ms})}{\|w\|_{V}}=C_{ms}\cfrac{b(w,p_{snap}-p_{ms})}{\|w\|_{V}}\\
 & =C_{ms}\cfrac{a(w,u_{snap}-u_{ms})}{\|w\|_{V}}\leq C_{ms}\|u_{snap}-u_{ms}\|_{a}.
\end{split}
\end{equation}
Therefore,
\[
\|u_{snap}-u_{ms}\|_{a}\leq (1+C_{ms})\|u_{snap}-v\|_{V}.
\]
Since $u_{snap}=u_{glo}$, we have
\[
\|u_{snap}-u_{ms}\|_{a}\leq (1+C_{ms})\|u_{glo}-v\|_{V}.
\]
We take $v$ to be the projection of $u_{glo}$ into the space $V_{ms}$ (c.f. Lemma \ref{lem:ms-infsup}), we have
\begin{equation*}
\|u_{snap}-u_{ms}\|_{a}\leq (1+C_{ms}) \alpha \| \tilde{\kappa}^{-1} f\|_s.
\end{equation*}
For the estimate for pressure, we use (\ref{eq:conv5}), we have
\begin{equation*}
\|\pi(p_{snap})-p_{ms}\|_{s}  \leq
(1+C_{ms})^2 \alpha \| \tilde{\kappa} f\|_s.
\end{equation*}

\end{proof}

We remark that, to obtain $O(H)$ convergence, we need to choose the size of the oversampling domain
such that
\begin{equation*}
C l^{2d} E (1+C_{glo}^2) (1+C_{ms})^4 = O(1).
\end{equation*}
So, we see that the size of the oversampling domain $l = O(\log(\kappa/H^2))$.

\section{Numerical results}
\label{sec:numresults}

In this section, we will present some numerical results of our proposed method. In our simulations, we take the computational domain $\Omega$ as $\Omega =[0,1]^2$. In the first example, we consider the medium parameter $\kappa$ shown in Figure \ref{fig:case1} and the fine grid size $h=1/256$. There are some high-contrast inclusions and high-contrast channels with the
contrast ratio of $10^4$. Also, the spectral problem
(\ref{eq:spectral}) has at most $3$ small eigenvalues over
all coarse elements.
In the second example, we consider the medium parameter
$\kappa$ shown in Figure \ref{fig:case2} and the fine grid size $h=1/256$.
In this case, there are also some high-conductivity
channels with the contrast ratio of $10^6$, and also at most $4$ small
eigenvalues for the spectral problem (\ref{eq:spectral}).
In the third example, we consider the medium parameter $\kappa$ shown
in Figure \ref{fig:case3} and the fine grid size $h=1/200$.
This is a standard SPE 10 test case \cite{cb01}.
To illustrate the performance of our method,
we use the following error quantities
\begin{equation*}
e_p = \frac{ \| p_h - p_{ms}\|_{L^2(\Omega)} }{\|p_h\|_{L^2(\Omega)}}, \quad
e_v = \frac{ \|v_h - v_{ms}\|_a}{\|v_h\|_a}
\end{equation*}
where $(v_h,p_h)$ is the approximate solution of
(\ref{eq:mixed}) on the fine grid $\mathcal{T}^h$
using the standard mixed finite element method.

\begin{figure}[ht]
 \centering \includegraphics[scale=0.53]{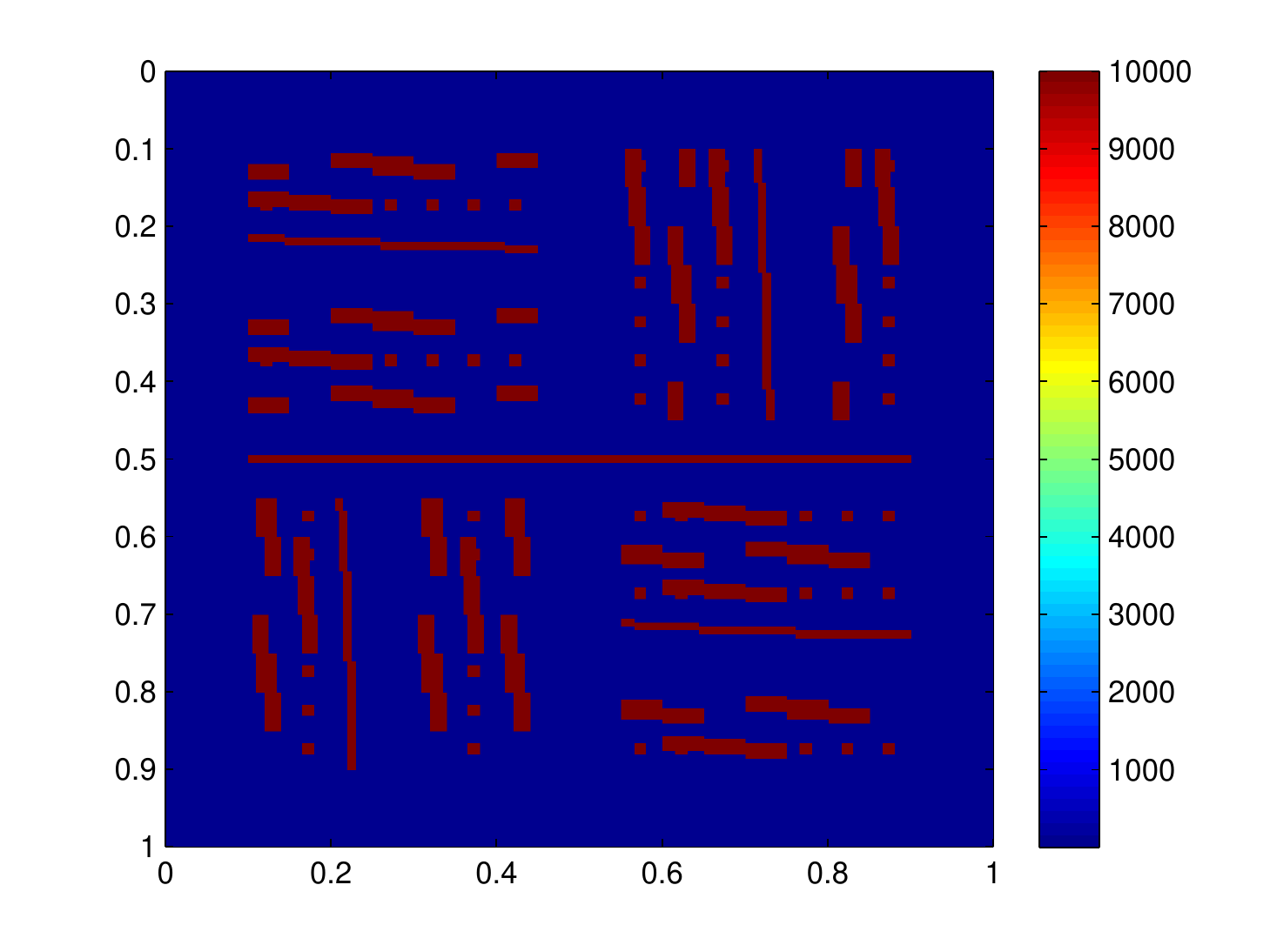}
\caption{The medium parameter $\kappa$ for the first example.}
\label{fig:case1}
\end{figure}

\begin{table}[ht]
\centering
\begin{tabular}{|c|c|c|c|c|}
\hline
Number basis per element & H & \# oversampling coarse layers & $e_p$ & $e_v$ \tabularnewline
\hline
3 & 1/8 & 3 & 12.2392\% & 3.4897\%\tabularnewline
\hline
3 & 1/16 & 4 & 2.7549\% & 0.9931\%\tabularnewline
\hline
3 & 1/32 & 5 & 0.8292\% & 0.3227\%\tabularnewline
\hline
3 & 1/64 & 6 & 0.2811\% & 0.1098\%\tabularnewline
\hline
\end{tabular}
\caption{Numerical results for the first example with $3$ basis functions per element.}
\label{tab:case1_error_new}
\end{table}

In Table~\ref{tab:case1_error_new}, we present the results for the first example.
In this case, the source function $f$ is defined with respect
to the grid with $H=1/8$.
We will take $f=1$ on the coarse element $[0,1/8]\times [7/8,1]$
and $f=-1$ on the coarse element $[7/8,1] \times [0,1/8]$,
and take $f=0$ otherwise.
Since there are at most $3$ small eigenvalues for the spectral problem
(\ref{eq:spectral})
over all coarse elements, our theory suggests that $3$ basis functions
are needed for each coarse element.
From this table, we see clearly that our method gives a convergence rate
proportional to the coarse mesh size $H$.
We remark that the number of coarse grid layers is predicted by our theoretical results.
We have observed that the convergence rate is independent of the contrast.
Moreover, one can observe that the velocity error is small even
 for relatively large
coarse-mesh sizes ($H=1/8$).

\begin{table}[ht]
\centering
\begin{tabular}{|c|c|c|c|c|}
\hline
Number basis per element & H & \# oversampling coarse layers & $e_p$ & $e_v$ \tabularnewline
\hline
1 & 1/8 & 3 & 28.5354\% & 75.6655\%\tabularnewline
\hline
1 & 1/16 & 4 & 12.5646\% & 18.2546\%\tabularnewline
\hline
1 & 1/32 & 5 & 5.6206\% & 3.0618\%\tabularnewline
\hline
1 & 1/64 & 6 & 2.3959\% & 0.8156\%\tabularnewline
\hline
\end{tabular}
\caption{Numerical results for the first example with $1$ basis function per element.}
\label{tab:case1_error_new_a}
\end{table}

In Table~\ref{tab:case1_error_new_a}, we present the results for the first example
with the use of only one basis function per element.
In this case, our theory suggests that the decay of the global basis functions
is relatively slow. Thus,
with the use of only a few layers in the oversampling region,
the performance of the method is worse compared with
the use of $3$ basis functions
per element.

\begin{figure}[ht]
 \centering
\includegraphics[scale=0.53]{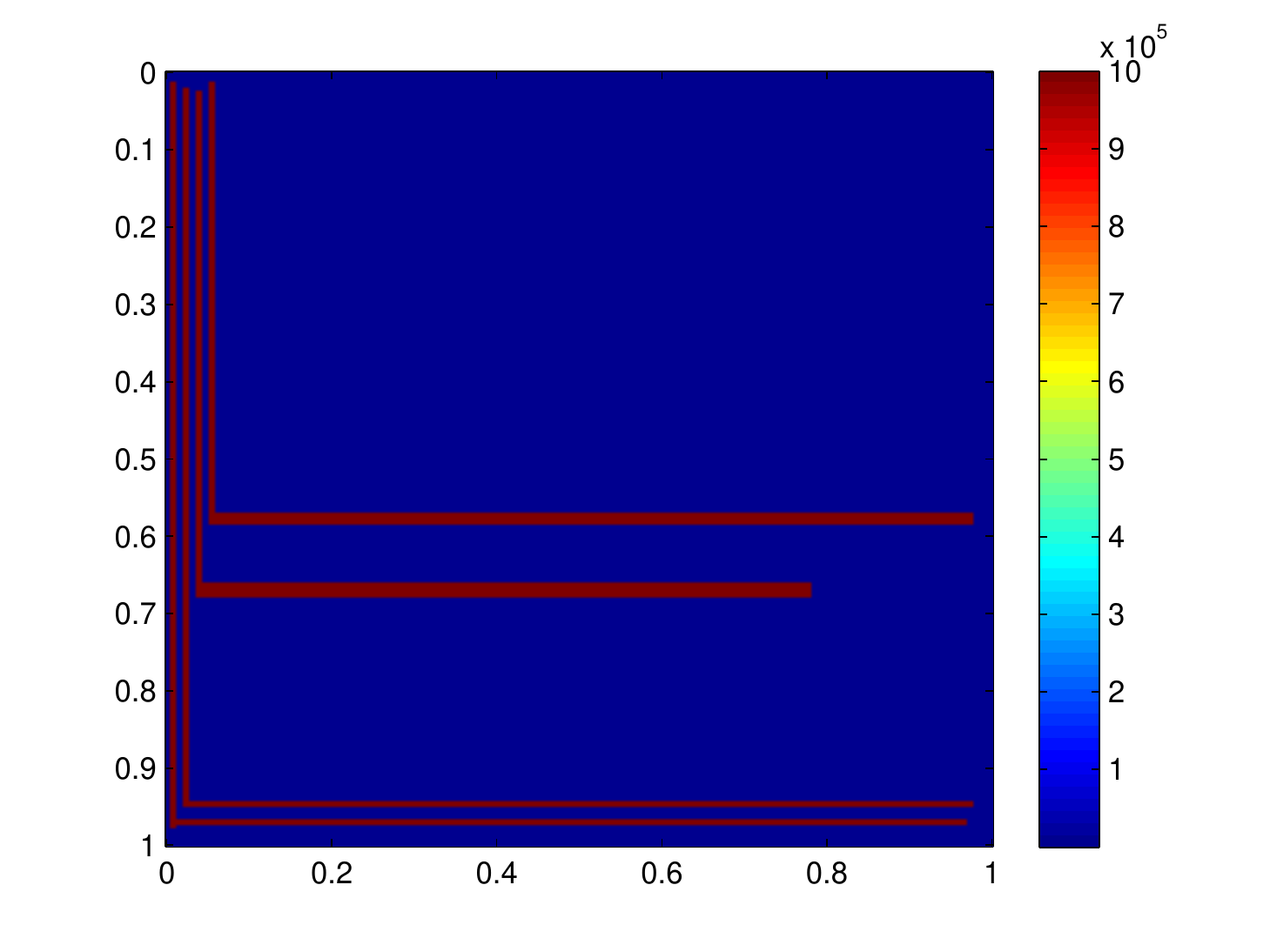}
\caption{The medium parameter $\kappa$ for the second example.}
\label{fig:case2}
\end{figure}


\begin{table}[ht]
\centering
\begin{tabular}{|c|c|c|c|c|}
\hline
Number basis per element & H & \# oversampling coarse layers & $e_p$ & $e_v$ \tabularnewline
\hline
4 & 1/8 & 3 & 58.8673\% & 67.3385\%\tabularnewline
\hline
4 & 1/16 & 4 & 7.5104\% & 17.9436\%\tabularnewline
\hline
4 & 1/32 & 5 & 2.5633\% & 6.5846\%\tabularnewline
\hline
4 & 1/64 & 6 & 0.7907\% & 1.9459\%\tabularnewline
\hline
\end{tabular}
\caption{Numerical results for the second example with $4$ basis functions per element.}
\label{tab:case2_error_new}
\end{table}

\begin{table}[ht]
\centering
\begin{tabular}{|c|c|c|c|c|}
\hline
Number basis per element & H & \# oversampling coarse layers & $e_p$ & $e_v$ \tabularnewline
\hline
1 & 1/8 & 3 & 86.5779\% & 105.0276\%\tabularnewline
\hline
1 & 1/16 & 4 & 211.2798\% & 89.6828\%\tabularnewline
\hline
1 & 1/32 & 5 & 44.3964\% & 50.7899\%\tabularnewline
\hline
1 & 1/64 & 6 & 6.4084\% & 8.3319\%\tabularnewline
\hline
\end{tabular}
\caption{Numerical results for the second example with $1$ basis functions per element.}
\label{tab:case2_error_new_a}
\end{table}

\begin{figure}[ht]
 \centering \includegraphics[scale=0.53]{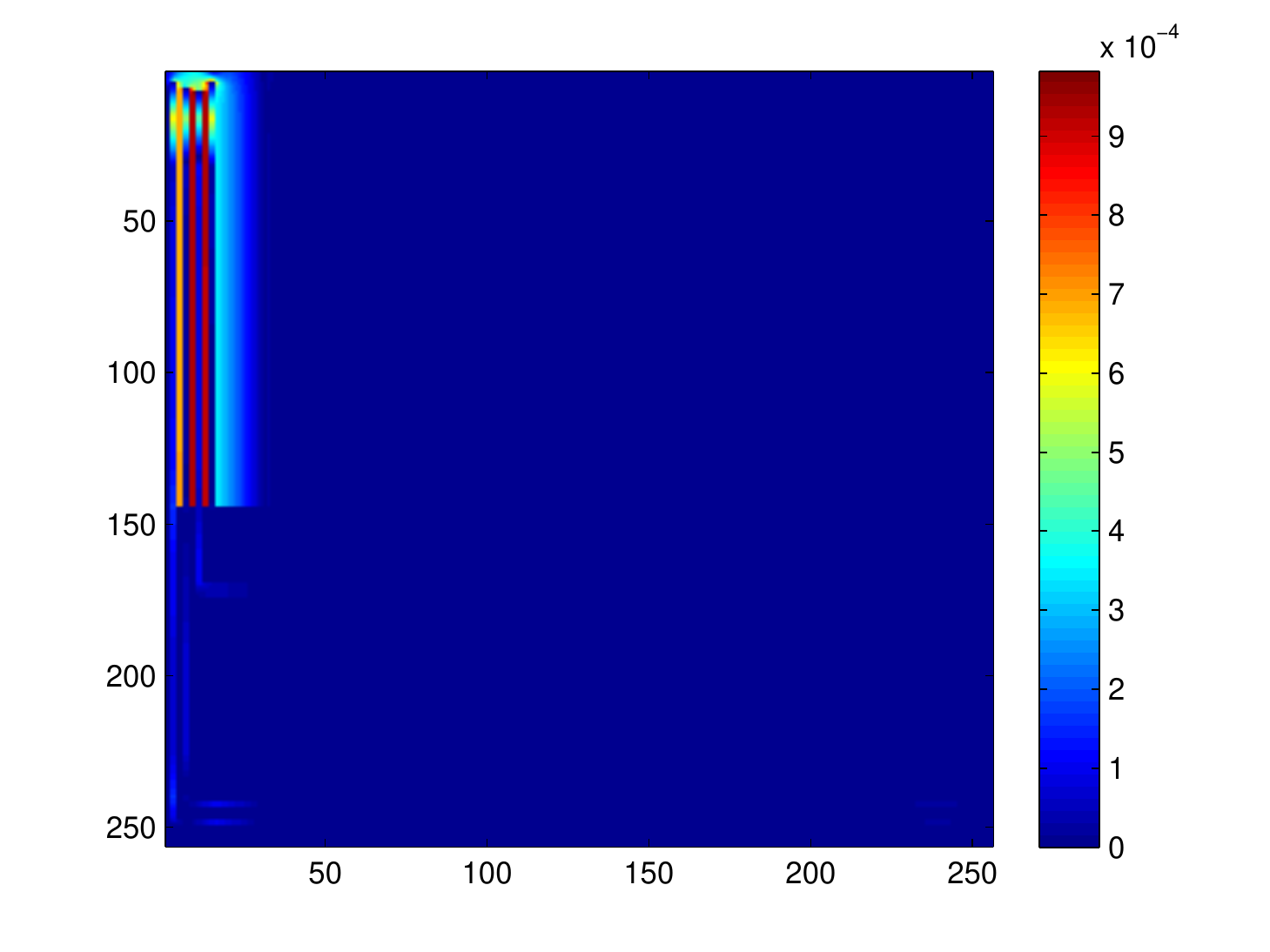}
 \includegraphics[scale=0.53]{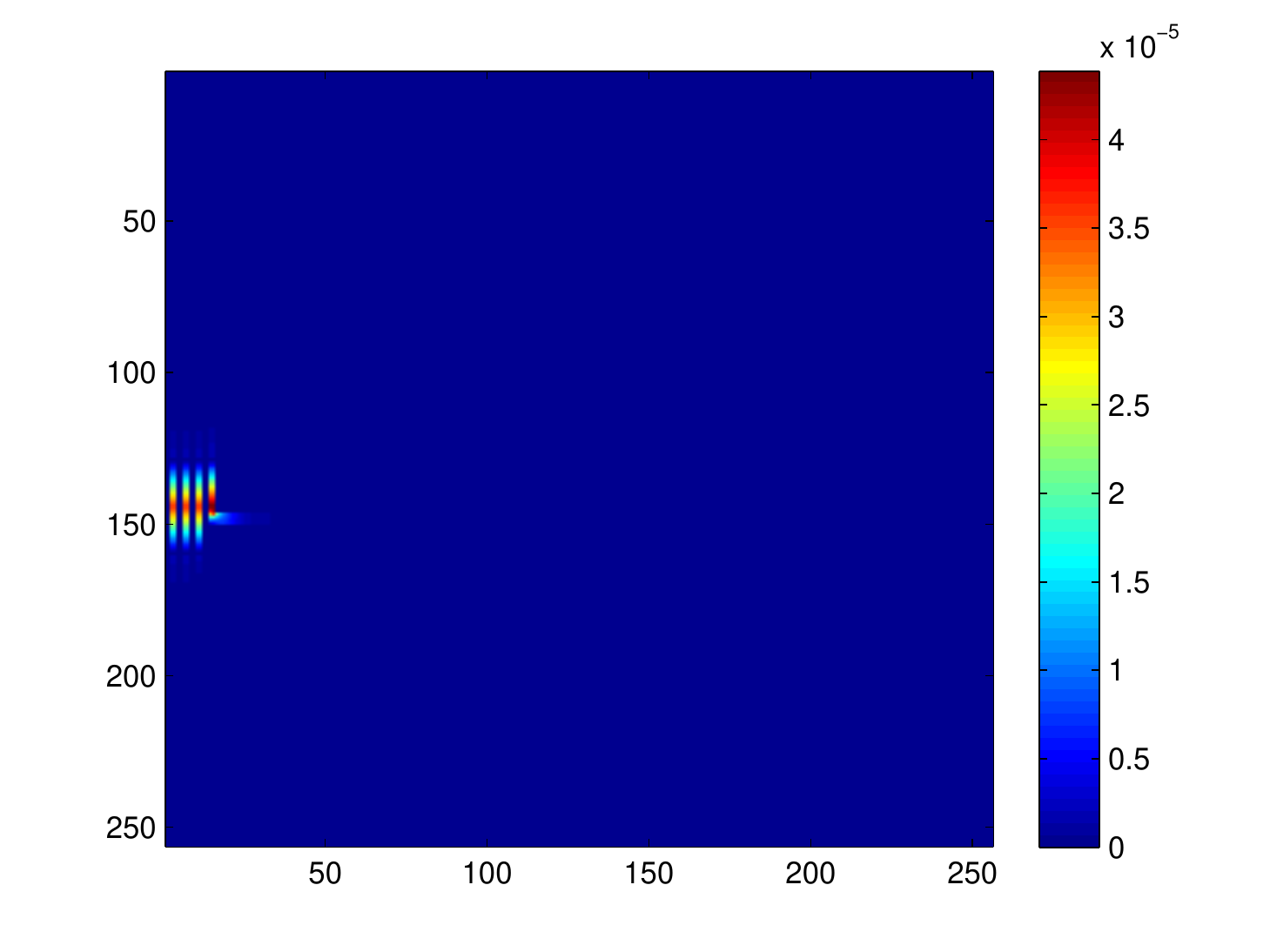}
\caption{The difference between the global basis and local basis $(\kappa|\psi_{glo}-\psi_{ms}|^2)^{\frac{1}{2}}$. Left: using $1$ basis function. Right: using $4$ basis functions.}
\label{fig:diff_basis}
\end{figure}

In Table~\ref{tab:case2_error_new}, we present the numerical
results for the second example,
where the source function $f$ is the same as the source used
in the first example.
Since there are at most $4$ small eigenvalues for the spectral
 problem (\ref{eq:spectral})
over all coarse elements, our theory suggests that $4$ basis
functions are needed for each coarse element.
From this table, we see  that our proposed method gives a convergence rate
proportional to the coarse mesh size $H$.
We remark that the number of coarse grid layers is predicted by our theoretical results.
In addition, we note that, the constrast ratio for this case is $100$ times larger
than that of the first example.
Nevertheless, we observe that the performance of our method is robust with respect to the
contrast ratio of the medium parameter. Moreover, one can obtain a
very good velocity approximation with $H=1/32$.
In Table \ref{tab:case2_error_new_a}, we present the numerical results with
a fewer basis functions and one can observe large errors.
We also observe that there is no convergence with respect to $H$ when the coarse mesh size is relatively large.
In Figure~\ref{fig:diff_basis}, we present the effect of localization
with respect to the number of basis functions.
We see that, when only one basis function is used, our theory predicts that the decay rate
of basis function is slow. On the other hand, when all eigenfunctions with small eigenvalues ($4$ in this example) are included
in the construction of basis functions, our theory predicts that the decay of basis function is fast and this is confirmed
by this figure.

\begin{figure}[ht]
 \centering \includegraphics[scale=0.53]{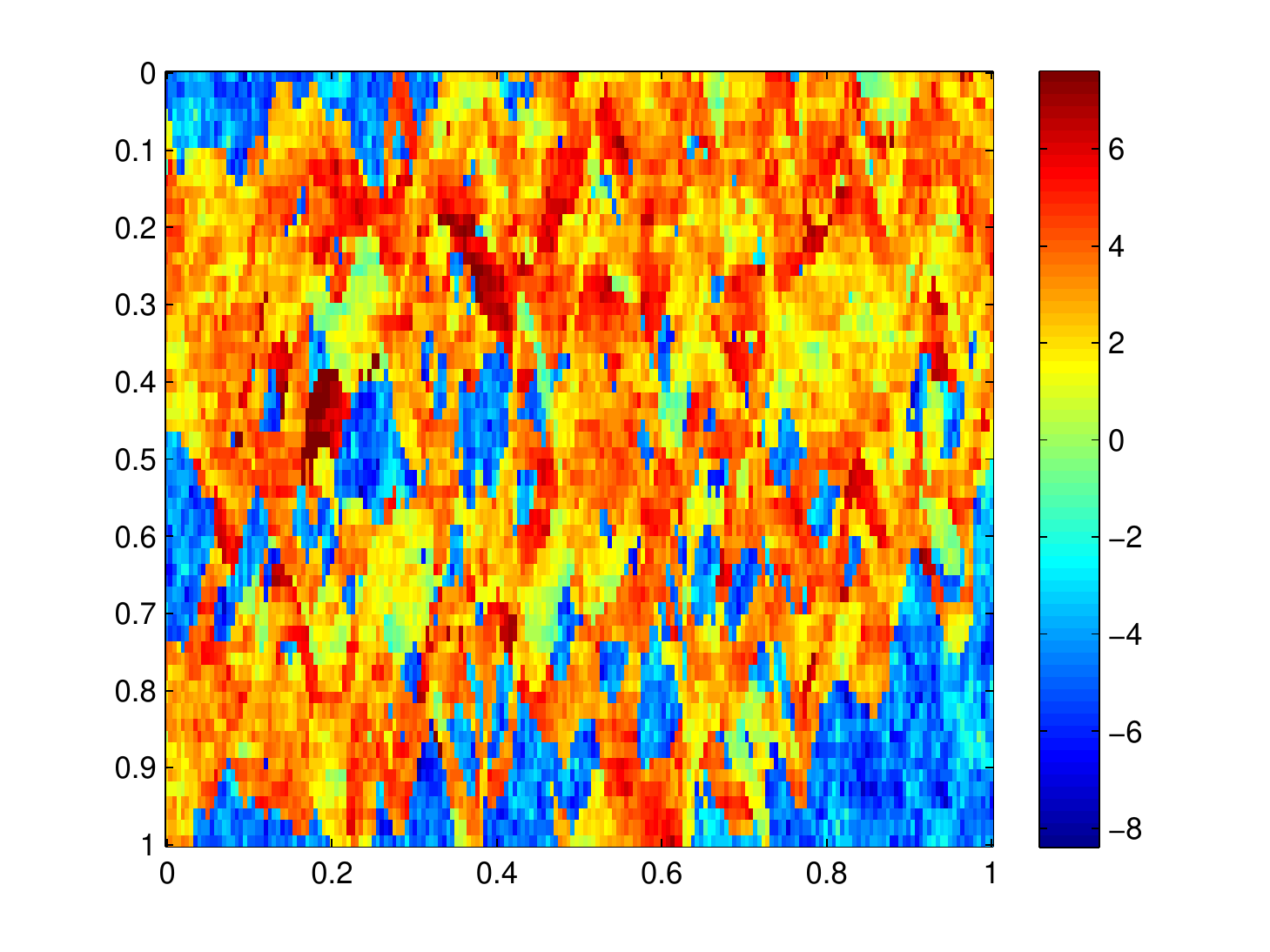}
 \includegraphics[scale=0.53]{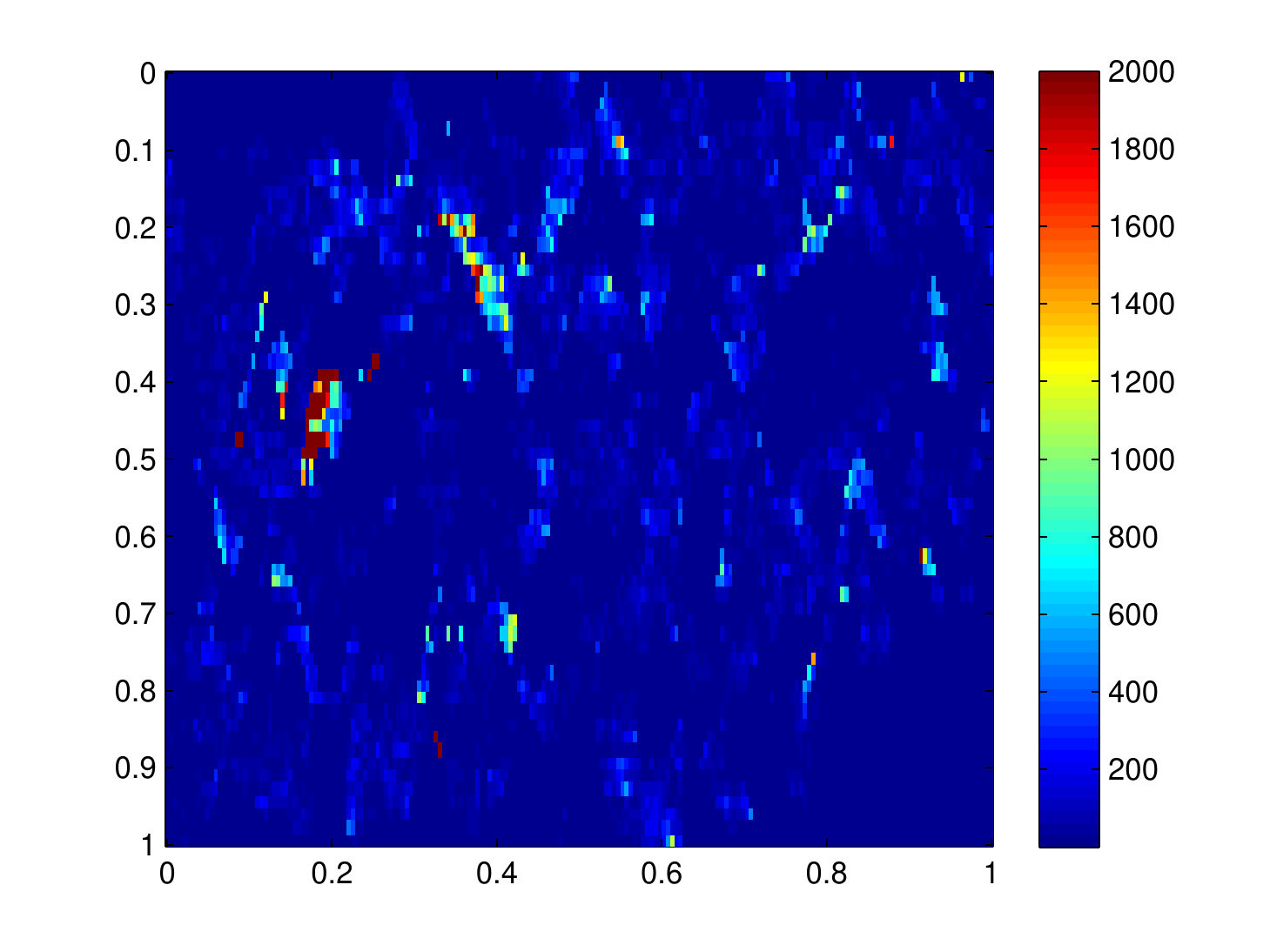}
\caption{The medium parameter $\kappa$ for the third example. Left: $\log(\kappa)$. Right: $\kappa$.}
\label{fig:case3}
\end{figure}

\begin{table}[ht]
\centering
\begin{tabular}{|c|c|c|c|c|}
\hline
Number basis per element & H & \# oversampling coarse layers & $e_p$ & $e_v$ \tabularnewline
\hline
4 & 1/10 & 3 & 8.5204\% & 12.7005\%\tabularnewline
\hline
4 & 1/20 & 4 (log(1/20)/log(1/10)*3=3.90)& 3.7952\% & 7.2393\%\tabularnewline
\hline
4 & 1/40 & 5 (log(1/40)/log(1/10)*3=4.8062) & 1.4931\% & 4.2122\%\tabularnewline
\hline
\end{tabular}
\caption{Numerical results for the third example.}
\label{tab:case3_error_new}
\end{table}

In our third example, we consider a SPE 10 test case shown in Figure~\ref{fig:case3},
where the constrast ratio is approximately $10^7$.
In this case, there are no clear separate channels.
The corresponding results are shown in Table~\ref{tab:case3_error_new}.
We select
the source function $f$ with respect to the grid with $H=1/10$.
In particular, $f=0$ except that $f=1$ on $[1,1/10]\times [9/10,1]$
and $f=-1$ on $[9/10,1]\times [1,1/10]$.
We determine the oversampling layers by using our theoretical results.
We note that in this case, there are not many small eigenvalues and
the number of basis functions per coarse element in the auxiliary space
is chosen to be 4. In this case, we observe a good agreement in the velocity
field for $H=1/20$. Moreover,
from this table, we see that we obtain the
convergence rate predicated by our theoretical estimates.

\section{Conclusion}

In this paper, we propose a new multiscale method for a class of high-contrast flow
problems in the mixed formulation.
We construct multiscale basis functions for both the pressure and the velocity.
For the basis functions for pressure, we use some dominant eigenfunctions
on each coarse elements.
For the velocity basis functions, we solve some cell problems
on oversampled domains using the pressure basis functions.
In particular, for each pressure basis function supported on a coarse element,
we will construct the corresponding velocity basis function
on an oversampled region containing the coarse element.
We show that this localized construction is justifed by showing an exponential decay
of a corresponding global construction.
Furthermore, we show that, when eigenfunctions with small eigenvalues
are included in the basis, the convergence rate depends on the mesh size
and is independent of the contrast of the media.
Some numerical results are shown to validate our theoretical statements.

\appendix

\section{Existence of global and multiscale basis functions}
\label{app}

In this appendix, we prove the existence of the problem (\ref{eq:inf1})-(\ref{eq:inf2}).
It suffices to prove the following. There exists a constant $\widetilde{C}$ such that
for all $(u,p)\in V_0(K^+_i)\times Q(K^+_i)$, there exist a pair $(v,q)\in V_0(K^+_i)\times Q(K^+_i)$ such that
\[
\|(u,p)\| \leq \widetilde{C} \, \frac{A((u,p),(v,q))}{\|(v,q)\|}
\]
where
\[
A((u,p),(v,q)) = a(u,v)-b(v,p)+b(u,q)+s(\pi p,\pi q)
\]
and
\begin{equation*}
\|(u,p)\|^2 = \|u\|_a^2 + \|p\|_s^2.
\end{equation*}

First, it is clear that
\[
\|u\|_{a}^{2}+\|\pi p\|_{s}^{2}\leq A((u,p),(u,p)).
\]
By the infsup condition (\ref{eq:infsup}), there exist a $w$ such that $\|w\|_{V}=\|(I-\pi)p\|_{s}$ and
\[
\|p\|_{s}^{2}\leq C^{-1} \, (\cfrac{b(w,p)}{\|w\|_{V}})^{2}+\|\pi p\|_{s}^{2}.
\]
Therefore,
\begin{align*}
A((u,p),(w,0)) & =a(u,w)-b(w,p)\geq a(u,w)+C \|(I-\pi)p\|_{s}\|w\|_{V} \\
 & \geq(C \|(I-\pi)p\|_{s}-\|u\|_{a})\|(I-\pi)p\|_{s}
\end{align*}
and
\begin{align*}
A((u,p),(0,\tilde{\kappa}^{-1}\nabla\cdot u)) & =s(\pi p,\tilde{\kappa}^{-1}\nabla\cdot u)+\|\tilde{\kappa}^{-\frac{1}{2}}\nabla\cdot u\|_{L^{2}(\Omega)}^{2}\\
 & \geq\|\tilde{\kappa}^{-\frac{1}{2}}\nabla\cdot u\|_{L^{2}(\Omega)}(\|\tilde{\kappa}^{-\frac{1}{2}}\nabla\cdot u\|_{L^{2}(\Omega)}-\|\pi p\|_{s}).
\end{align*}
Next, we take
$(v,q)=(u+\beta w,p+\tilde{\kappa}^{-1}\nabla\cdot u)$, so
\begin{align*}
&\, A((u,p),(v,q)) \\
  =&\,\|u\|_{a}^{2}+\|\pi p\|_{s}^{2}+\beta(C\|(I-\pi)p\|_{s}-\|u\|_{a})\|(I-\pi)p\|_{s}
+(\|\tilde{\kappa}^{-\frac{1}{2}}\nabla\cdot u\|_{L^{2}(\Omega)}-\|\pi p\|_{s})\|\tilde{\kappa}^{-\frac{1}{2}}\nabla\cdot u\|_{L^{2}(\Omega)}\\
  \geq &\, \|u\|_{a}^{2}+\|\pi p\|_{s}^{2}
 +\frac{\beta C}{2} \|(I-\pi)p\|_{s}^{2}-\cfrac{\beta}{2C}\|u\|_{a}^{2}+\frac{1}{2}\|\tilde{\kappa}^{-\frac{1}{2}}\nabla\cdot u\|_{L^{2}(\Omega)}^{2}-\cfrac{1}{2}\|\pi p\|_{s}^{2}
\end{align*}
Finally, we choose $\beta=C$, we have
\[
A((u,p),(v,q))\geq\cfrac{1}{2}\|u\|_{a}^{2}+\cfrac{1}{2}\|\pi p\|_{s}^{2}+\cfrac{1}{2}\|\tilde{\kappa}^{-\frac{1}{2}}\nabla\cdot u\|_{L^{2}(\Omega)}^{2}+\cfrac{C^2}{2}\|(I-\pi)p\|_{s}^{2}
\]
and
\[
\|(v,q)\|\leq\|(u,p)\|+C\|(I-\pi)p\|_{s}+\|\tilde{\kappa}^{-\frac{1}{2}}\nabla\cdot u\|_{L^{2}(\Omega)}.
\]
This completes the proof.

\bibliographystyle{plain}
\bibliography{references,references1,references_outline}

\end{document}